\documentclass{article}[12pt]
\usepackage{amsmath}
\usepackage{amsthm}
\usepackage{amsfonts}
\usepackage{amssymb}
\usepackage{amstext}
\usepackage{amsopn}
\usepackage{latexsym}
\setlength{\topmargin}{0pt}
\setlength{\oddsidemargin}{1.4cm} \setlength{\evensidemargin}{1.4cm}
\newtheorem{thm}{Theorem}[section]

\newtheorem{cor}{Corollary}[section]
\newtheorem{lemma}{Lemma}[section]

\begin{document}
\title{Convex Hulls of Dragon Curves\thanks{The work is supported by NSFC (No. 1250010257).}}

\author{Fan WEN \\ \\
Department of Mathematics\\
Jinan University\\
Guangzhou 510632, China\\
E-mail: wenfan@jnu.edu.cn}

\maketitle

\begin{abstract}
The fundamental geometry of self-similar sets becomes significantly more complex when the generating contractive maps include non-trivial rotational components. A well-known family exemplifying this complexity is that of the dragon curves in the plane. In this paper, we prove that  every dragon curve has a polygonal convex hull. Moreover, we completely characterize their convex hulls.

\medskip

\noindent{\bf Keywords}\,
Self-similar set, Dragon curve, Convex hull

\medskip

\noindent{\bf 2010 MSC:} 28A78
\end{abstract}

\section{Introduction}

Let $\{f_1,f_2,\dots, f_m\}$ be a family of contractive maps of $\mathbb{R}^d$, where $m\geq 2$ and $d\geq 1$ are integers. By Hutchinson's Theorem \cite{Hu}, there is a unique nonempty compact set $K\subset\mathbb{R}^d$ satisfying $$K=\bigcup_{j=1}^mf_j(K).$$ In this case, $\{f_1,f_2,\dots, f_m\}$ is called an iterated function system (IFS) on $\mathbb{R}^d$ and $K$ is called its attractor. We say that $K$ is self-similar if the IFS consists of similarity maps and self-affine if the IFS consists of affine maps.

The basic geometry of IFS attractors gives rise to many intriguing questions. For instance, determining when the convex hull of an IFS attractor is a polygon has been explored by multiple researchers.
Let $m\geq 2$ be an integer. Let $$f_j(x)=A_jx+d_j,\, x\in \mathbb{R}^d; \, j=1,2,\cdots m$$ where $A_j$'s are $d\times d$ contractive invertible matrices and $d_j$'s are points in $\mathbb{R}^d$. Let $K$ be the self-affine set of the IFS $\{f_1,f_2,\dots, f_m\}$. In the case $$A_1=\cdots= A_m=A,$$
Strichartz-Wang \cite{SW} proved that the convex hull of $K$ is a polygon if and only if there is an integer $s\geq 1$ such that $A^s$ is a scalar matrix.
In the general case, Kirat-Kocyigit \cite{KK} gave an algorithmic necessary and sufficient condition for the convex hull of any specific $K$ to be a polygon. Nevertheless, the method can not be applied to families with uncountable such sets.

%Let $\{f_1,f_2,\dots, f_m\}$ be an IFS on $\mathbb{R}^d$ and $K$ be its attractor. Let $\Sigma_m$ be the set of infinite sequences formed by digits %$1,2,\dots, m$. For each $\sigma=j_1j_2\cdots\in\Sigma_m$ let $x_{\sigma}$ be the unique point of
%$$\bigcap_{k=1}^\infty f_{j_1}\circ f_{j_2}\circ\cdots\circ f_{j_k}(K).$$
%Then $\pi:\Sigma_m\to K$, $\pi(\sigma)=x_{\sigma}$, is a well-defined surjective map. We call $\pi$ the coding map of $K$.
%If $x=x_{\sigma}$ for some $\sigma\in\Sigma_m$, we call $\sigma$ a coding of $x$. Thus every point in $K$ has at least one coding.
%\begin{thm}\label{tm0}
%Let $K$ be the self-similar set of IFS $\{f_1,f_2,\dots, f_m\}$ on the complex plane $\mathbb{C}$, where $$f_j(z)=a_jz+b_j,\ a_j,b_j\in \mathbb{C},\ %0<|a_j|<1.$$ Suppose $K$ is not a singleton. If $\mbox{co}(K)$ has an extreme point whose coding is eventually periodic then $a_{j_1}a_{j_2}\cdots %a_{j_k}>0$.
%\end{thm}

Let $\mathbb{C}$ be the complex plane. For $z\in\mathbb{C}$ we denote by $|z|$, $\mbox{Re} z$, $\mbox{Im} z$, and $\bar{z}$ the absolute value, the real part, the imaginary part, and the conjugate of $z$ respectively. Denote by $\arg z$ the argument of $z$ in $[0,2\pi)$. For a subset $E$ of $\mathbb{C}$ we denote by $\mbox{co}(E)$ the convex hull of $E$.

Now we recall the definition of dragon curves. Let $\eta\in(0,{\pi}/{3})$.  Let
$$
a=\frac{e^{-i\eta}}{2\cos\eta}.
$$
Then $a+\bar{a}=1$, $2|a|\cos\eta=1$, $|a|\in(1/2,1)$, $\mbox{Re} a=1/2$, $\mbox{Im} a=-|a|\sin\eta$, $\arg a=2\pi-\eta$, $2|a|^2<1$ for $\eta\in(0,\pi/4)$, and $2|a|^2\geq1$ for $\eta\in[\pi/4,\pi/3)$.

Let $\{f_1, f_2\}$ be an IFS on $\mathbb{C}$, where
$$
f_1(z)=az \,\mbox{ and }\, f_2(z)=1-\bar{a}z.
$$
Then $f_1(0)=0$, $f_2(0)=1$, and $f_1(1)=f_2(1)=a$. The self-similar set of $\{f_1, f_2\}$ is called a dragon curve and is denoted by $K_\eta$. By \cite{H}, $K_\eta$ is path-connected.

The fundamental geometric properties of dragon curves are nontrivial. To date, the problem of characterizing when a dragon curve is an arc and when its IFS satisfies the open set condition (cf. \cite{Schief}) has only been partially solved; see \cite{AKW,A,Al,B,Fa,K,T}.
By using their algorithm, Kirat-Kocyigit \cite{KK} verified that the convex hull of the dragon curve $K_{\pi/4}$ is a polygon. In the present paper we prove that every dragon curve has a polygonal convex hull. Moreover, we completely characterize their convex hulls.

\begin{thm}\label{tm1}
Every dragon curve  has a polygonal convex hull.
\end{thm}

To prove Theorem \ref{tm1}, it suffices to show that, for each $\eta\in(0,\pi/3)$, the convex hull of $K_\eta$ coincides with the convex hull of a certain finite subset of $K_\eta$. Such a subset of $K_\eta$ is given as follows:

Let $z_0$ be the fixed point of $f_2^2\circ f_1^2$, where $f^{k}$ denotes the $k$-th iterate of $f$. Then $z_0=ca\in K_\eta$, where
\begin{equation}\label{ct}
c=\frac{1}{1-|a|^4}.
\end{equation}
For every integer $k\geq 0$ let
$$
z_k=f_1^{k}(z_0),\, w_k=f_2(z_k), \mbox{\, and \,} b_1=f_2(w_1).
$$
Since $z_0\in K_\eta$, one has $z_k,w_k, b_1\in K_\eta$. By computation, one has
\begin{equation}\label{a0}
z_k=ca^{k+1},\, w_k=1-c|a|^2a^k,\, b_1=a+c|a|^4,\,\mbox{ and }\,f_2(w_2)=z_0.
\end{equation}
Let
$$
V_k=\{b_1,z_0,z_1,\cdots,z_k, w_1\cdots, w_k\}
$$
and
$$V=\bigcup_{k=1}^\infty V_k.$$
Then $V$ is a countable subset of $K_\eta$. We shall show that, for every $\eta\in(0,\pi/3)$, there is an integer $k\geq 1$ such that
\begin{equation}\label{cb}
\mbox{co}(K_\eta)=\mbox{co}(V)=\mbox{co}(V_k),
\end{equation}
which gives Theorem \ref{tm1}.  A detailed proof will be given in Section 2.

\medskip

To characterize the convex hulls of dragon curves, for every $\eta\in(0,\pi/3)$ we hope to
find the smallest integer $k$ satisfying $\mbox{co}(K_\eta)=\mbox{co}(V_k)$ and prove that the points in $V_k$ are the vertices of $\mbox{co}(K_\eta)$.
We completely solve this problem by assigning every integer $k\geq 4$ a value $\eta_k$ in $(\pi/k, \pi/(k-1))$ and proving the following theorem.

\begin{thm}\label{tm2}
Let $k\geq 3$ be an integer and $\eta\in[\eta_{k+1},\eta_{k})$, where $\eta_3=\pi/3$. Then the vertices of $\mbox{co}(K_\eta)$ are $b_1,z_0,z_1,\cdots,z_k, w_1,\cdots,w_k$ in clockwise order.
\end{thm}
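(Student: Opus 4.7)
The plan is to verify (a) the proposed $2k+2$ points, listed in the stated cyclic order, are genuinely the vertices of a convex polygon, and (b) every other point of $V$ lies in this polygon, so that $\mbox{co}(V_k)=\mbox{co}(V)=\mbox{co}(K_\eta)$ by the argument already developed for Theorem \ref{tm1}.

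For (a), the identity $1-a=\bar a$ from (\ref{begin}) together with (\ref{a0}) gives the consecutive edge vectors
\[z_0-b_0=-c|a|^4\bar a,\qquad z_{j+1}-z_j=-c\bar a\,a^{j+1},\qquad w_{j+1}-w_j=c\bar a\,|a|^2 a^{j},\]
with arguments $\pi+\eta$, $\pi-j\eta$, and $-(j-1)\eta$ respectively. Along the two ``arithmetic'' arcs $b_0,z_0,\dots,z_k$ and $w_1,\dots,w_k$ the edge direction therefore decreases by exactly $\eta$ at each step, as required for clockwise traversal; only the four ``bridge'' turns at $z_k$, $w_1$, $w_k$, and $b_0$ need a genuine calculation. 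Using $|a|^{k+1}\sin 2\eta=|a|^k\sin\eta$ (a consequence of $2|a|\cos\eta=1$) and $c(1-|a|^4)=1$, a direct expansion reduces the cross product at $z_k$ to
\[\mbox{Im}\bigl[(z_k-z_{k-1})\overline{(w_1-z_k)}\bigr]=c^2|a|^{k+1}\Phi_k(\eta),\]
and the parallel computation at $w_k$ yields $c^2|a|^{k+3}\Phi_k(\eta)$. The boundary evaluations $\Phi_k(\pi/k)=\sin(\pi/k)(1-|a|^2-|a|^4+|a|^k)>0$ and $\Phi_k(\pi/(k-1))=\sin(\pi/(k-1))(|a|^k-|a|^3)<0$ locate the sign change at $\eta_k$, and the hypothesis $\eta\in[\eta_{k+1},\eta_k)$ combined with sign analysis on the full interval gives $\Phi_k(\eta)>0$; both turns at $z_k$ and $w_k$ are therefore clockwise. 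The turns at $w_1$ and $b_0$ both reduce to a positive multiple of $\sin\eta+|a|^{k-2}\sin(k+1)\eta$, whose positivity on $(0,\pi/3)$ I would establish as a separate lemma: for $k=4$ it factors as $(4\cos^2\eta)^{-1}\sin 3\eta\,(1+2\cos 2\eta)$, and for larger $k$ the recurrence $\sin(k+2)\eta=2\cos\eta\sin(k+1)\eta-\sin k\eta$ combined with $2|a|\cos\eta=1$ should give an inductive reduction.

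For (b), the same cross-product method yields $\mbox{Im}\bigl[\overline{(w_1-z_k)}(z_{k+1}-z_k)\bigr]=c^2|a|^{k+2}\Phi_{k+1}(\eta)$, and the hypothesis $\eta\ge\eta_{k+1}$ forces $\Phi_{k+1}(\eta)\le 0$, placing $z_{k+1}$ on or inside the edge $z_kw_1$; a mirror argument handles $w_{k+1}$. To propagate this to the full tails $\{z_{k+j},w_{k+j}:j\ge 2\}$, I would prove $f_1(\mbox{co}(V_k))\subseteq\mbox{co}(V_k)$ by checking $f_1$ on each vertex: since $f_1(z_j)=z_{j+1}$ is automatic, only $f_1(b_0)$ and $f_1(w_j)=a-c|a|^2a^{j+1}$ need verification. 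Iterating $f_1$ then keeps the $z$-tail inside $\mbox{co}(V_k)$, and an analogous invariance for $f_2$ confines the $w$-tail.

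The main obstacle I anticipate is precisely the auxiliary positivity lemma $\sin\eta+|a|^{k-2}\sin(k+1)\eta>0$, which controls the two ``fixed'' bridge turns at $w_1$ and $b_0$. Unlike the turns at $z_k$ and $w_k$, this does not reduce to the controlling function $\Phi_k$, and the expression vanishes at the excluded endpoint $\eta=\pi/3$ for infinitely many $k$ (exactly when $\sin((k+1)\pi/3)=-\sqrt{3}/2$), so any uniform argument must genuinely exploit $\eta<\pi/3$ rather than the narrower constraint $\eta\in[\eta_{k+1},\eta_k)$.
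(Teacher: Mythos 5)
Your outline shares the paper's skeleton for the convexity check: the interior angles along the two arithmetic arcs are each $\pi-\eta$ (the paper's Lemma \ref{ang1}), and the four bridge turns at $z_k,w_1,w_k,b_0$ are governed by exactly the two functions you identify, $\Phi_k$ and $\Psi_k=\sin\eta+|a|^{k-2}\sin(k+1)\eta$ (Lemmas \ref{Im}, \ref{ang2}, \ref{posi0}, \ref{psi}). Two problems remain in this half. First, your concern about $\Psi_k$ near $\eta=\pi/3$ is inverted: under the standing hypothesis $\eta\in[\eta_{k+1},\eta_k)\subset(\pi/(k+1),\pi/(k-1))$ one has $\sin\eta>2/(k+1)$ while $|a|^{k-2}\le 2^{-(k-2)/2}$, so positivity is easy precisely because of the narrower constraint you propose to discard; no uniform statement on $(0,\pi/3)$ is needed (and for $\eta\in[\eta_4,\pi/3)$ the theorem does not apply at all). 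Second, and more seriously, local convexity of the closed broken line (all $2k+2$ turns clockwise and less than $\pi$) does not show that $b_0z_0\cdots z_kw_1\cdots w_kb_0$ is a simple loop: the total turning is $(2k-2)\eta$ plus four angles each in $(0,\pi)$, and since $(2k-2)\eta$ can be close to $2\pi$, winding number $2$ is not excluded by these local conditions. The paper closes this with a separate half-plane argument (the lines $l_1,l_2$ through $0$ and $1$ of direction $-a/|a|$, locating $b_0$, $z_k$, $w_k$ and the two arcs), which your proposal omits entirely.

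The larger gap is in part (b). Knowing $\Phi_{k+1}(\eta)\le 0$ places $z_{k+1}$ in the half-plane bounded by the line through $z_k$ and $w_1$, but that alone does not put $z_{k+1}$ in $\mbox{co}(V_k)$; it must be controlled against the other edges as well. The paper proves $z_j\in\mbox{co}(\{0,z_{j-1},w_1,1\})$, which needs three ingredients absent from your proposal: a sector condition on $\arg z_j$, the containment $0,1\in\mbox{co}(V_k)$ (Lemma \ref{01}), and the angle $\angle 1z_jw_1\in(0,\pi)$, which is governed by a third family $\Theta_j(\eta)=(1-|a|^4)\sin\eta+|a|^{j+1}\sin j\eta$ (Lemmas \ref{theta} and \ref{half1}). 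Your idea of handling the tails via the invariances $f_1(\mbox{co}(V_k))\subseteq\mbox{co}(V_k)$ and $f_2(\mbox{co}(V_k))\subseteq\mbox{co}(V_k)$ is genuinely attractive: it would replace the paper's explicit tracking of the spiral (Lemmas \ref{54}, \ref{55}, \ref{disc}) and yield $K_\eta\subseteq\mbox{co}(V_k)$ at once. But the one genuinely new vertex image in that invariance is $f_1(z_k)=z_{k+1}$, i.e., exactly the under-justified containment above; and the images $f_1(w_j)$, $f_2(z_0)=w_0$, $f_2(b_0)$ require the nontrivial work of Lemma \ref{cons} (the triangle containment of $w_0$ and the induction $f_1(w_{j+1})=(1-|a|^2)f_1(w_j)+|a|^2b_{j-1}$), which you assert can be "checked" but do not supply. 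As written, the proposal identifies the right controlling quantities but leaves both the simplicity of the boundary loop and the membership $z_{k+1}\in\mbox{co}(V_k)$ unproved.
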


By Theorem \ref{tm2}, $\mbox{co}(K_\eta)$ is an octagon for every $\eta\in[\eta_4,\pi/3)$. The number of the vertices of $\mbox{co}(K_\eta)$ is a decreasing right-continuous step function of $\eta$ that takes all even values $\geq 8$ and tends to $\infty$ as $\eta$ tends to $0$.

For $u,v,u_1,u_2,\cdots, u_n\in \mathbb{C}$ we denote by $[u,v]$ the line segment from $u$ to $v$ and by $[u_1,u_2,\cdots, u_n]$ the broken line formed by the line segments $[u_1,u_2]$, $[u_2,u_3]$, $\cdots$, $[u_{n-1},u_n]$. We say that $[u_1,u_2,\cdots, u_n,u_1]$ is a convex polygonal broken line if it bounds a convex polygon of vertices $u_1,u_2,\cdots, u_n$.

Let $k\geq 3$ be an integer and $\eta\in[\eta_{k+1},\eta_{k})$. To prove Theorem \ref{tm2}, we first show that $[b_1,z_0,z_1,\cdots, z_k,w_1,w_2,\cdots, w_k,b_1]$ is a clockwise-oriented simple closed broken line and that its interior angle at each of the listed points is less than $\pi$. This shows that the broken line is convex polygonal, and hence the convex hull $\mbox{co}(V_k)$ is a polygon of vertices $$b_1,z_0,z_1,\cdots,z_k, w_1,w_2,\cdots,w_k$$ in clockwise order.
After that, we prove $$\{z_j: j> k\}\cup \{w_j: j> k\}\subset\mbox{co}(V_k),$$ which yields $\mbox{co}(V_k)=\mbox{co}(V)$. Then, since the equality $\mbox{co}(K_{\eta})=\mbox{co}(V)$ will be established in the proof of Theorem \ref{tm1}, we have $\mbox{co}(K_{\eta})=\mbox{co}(V_k)$.

The paper is organized as follows. In Section 2, we prove Theorem \ref{tm1} by showing that (\ref{cb}) holds for sufficiently large integer $k(\eta)$. In Section 3, we assign every integer $k\geq 4$ a value $\eta_k$ in $(\pi/k, \pi/(k-1))$. In Section 4, we prove that, for every integer $k\geq 3$ and every $\eta\in[\eta_{k+1},\eta_k)$, the convex hull $\mbox{co}(V_k)$ is a polygon of vertices $b_1,z_0,z_1,\cdots,z_k, w_1,w_2,\cdots,w_k$ in clockwise order, where $\eta_3=\pi/3$.
In Section 5, we verify $\mbox{co}(V_3)=\mbox{co}(V)$ for $\eta\in[\eta_4,\pi/3)$. In Section 6, we establish certain disk properties for dragon curves $K_\eta$ with $\eta\in(0,\eta_4)$. In Section 7, we prove $\mbox{co}(V_k)=\mbox{co}(V)$ for every integer $k\geq 4$ and every $\eta\in[\eta_{k+1},\eta_k)$ by combining the disk properties with a thorough analysis.

For the sake of convenience, we list some equalities that will be used later.
\begin{eqnarray*}
&& \cos3\eta=4\cos^3\eta-3\cos\eta=\frac{1-3|a|^2}{2|a|^3}. \\
&& \cos4\eta=8\cos^4\eta-8\cos^2\eta+1=\frac{1-4|a|^2+2|a|^4}{2|a|^4}.\\
%&& \cos5\eta=%16\cos^5\eta-20\cos^3\eta+5\cos\eta=
%\frac{1-5|a|^2+5|a|^4}{2|a|^5}.\\
%&&\cos6\eta=%32\cos^6\eta-48\cos^4\eta+18\cos^2\eta-1
%\frac{1-6|a|^2+9|a|^4-2|a|^6}{2|a|^6}.\\
&& \frac{\sin3\eta}{\sin\eta}=4\cos^2\eta-1=\frac{1-|a|^2}{|a|^2}. \\
&& \frac{\sin4\eta}{\sin\eta}=8\cos^3\eta-4\cos\eta=\frac{1-2|a|^2}{|a|^3}.\\
&& \frac{\sin5\eta}{\sin\eta}=16\cos^4\eta-12\cos^2\eta+1=\frac{1-3|a|^2+|a|^4}{|a|^4}.\\
&&\frac{\sin6\eta}{\sin\eta}=%32\cos^5\eta-32\cos^3\eta+6\cos\eta=
\frac{1-4|a|^2+3|a|^4}{|a|^5}.\\
&&\frac{\sin7\eta}{\sin\eta}%=64\cos^6\eta-80\cos^4\eta+24\cos^2\eta-1
=\frac{1-5|a|^2+6|a|^4-|a|^6}{|a|^6}.\\
&&\frac{\sin8\eta}{\sin\eta}=\frac{1-6|a|^2+10|a|^4-4|a|^6}{|a|^7}.
\end{eqnarray*}

\section{The Proof of Theorem \ref{tm1}}

Let $a$, $f_1$, $f_2$, $K_\eta$, $c$, $z_k$, $w_k$, $b_1$, $V_k$, and $V$ be defined as in Section 1. They all depend on the parameter $\eta$. For simplicity, we will seldom mark this dependence in our notation. Additionally, since their definitions and basic relations will be frequently applied, we shall not explicitly cite them on every occasion.

To prove Theorem \ref{tm1}, it suffices to show that, for every $\eta\in(0,\pi/3)$ there is an integer $k\geq 1$ such that $\mbox{co}(K_\eta)=\mbox{co}(V_k)=\mbox{co}(V)$.

\begin{lemma}\label{L2.1}
For every $\eta\in(0,\pi/3)$ there is a positive integer $k$ such that $$[0,1]\subset\mbox{co}(V)=\mbox{co}(V_k).$$
\end{lemma}

\begin{proof}
Let $\eta\in(0,\pi/3)$. By the definition, we see that $0$ lies in the interior of $\mbox{co}\{z_k:k\geq 1\}$. Since $f_2$ is a similarity map and $f_2(0)=1$, we see that $1$ lies in the interior of $\mbox{co}\{w_k:k\geq 1\}$. Thus $[0,1]$ lies in the interior of $\mbox{co}(V)$. Moreover, since
$$\lim_{k\to\infty}z_k=0 \,\mbox{ and }\, \lim_{k\to\infty}w_k=1,$$
there is an integer $k$ such that $\{z_j: j>k\}\cup\{w_j:j>k\}$ lies in the interior of $\mbox{co}(V)$. For such an integer $k$ we have $\mbox{co}(V)=\mbox{co}(V_k)$, as desired.
\end{proof}

\begin{lemma}\label{ylw} Let $\eta\in(0,\pi/3)$. For every integer $k\geq 0$ we have
$$f_{1}(w_{k+1})=(1-|a|^2)f_{1}(w_k)+|a|^2f_2(w_k).$$
\end{lemma}
\begin{proof}
By $a+\overline{a}=1$ we have
$$a^3-a^2+|a|^2a^2+|a|^4=-a^2\bar{a}+|a|^2a=0.$$
Then, for every integer $k\geq 0$ we have
\begin{eqnarray*}
&&(1-|a|^2)f_{1}(w_k)+|a|^2f_2(w_k)=(1-|a|^2)aw_k+|a|^2(1-\overline{a}w_k)\\
&=&(1-|a|^2)(a-c|a|^2a^{k+1})+|a|^2(a+c|a|^4a^{k-1})\\
&=& a-c|a|^2a^{k+1}+c|a|^4a^{k+1}+c|a|^6a^{k-1} \\
&=& a-c|a|^2a^{k+2}+c|a|^2a^{k+2}-c|a|^2a^{k+1}+c|a|^4a^{k+1}+c|a|^6a^{k-1} \\
&=& a(1-c|a|^2a^{k+1})+c|a|^2a^{k-1}(a^3-a^2+|a|^2a^2+|a|^4) \\
&=& aw_{k+1}=f_{1}(w_{k+1}),
\end{eqnarray*}
as desired.
\end{proof}

\begin{lemma}\label{1L2.1} Let $\eta\in(0,\pi/3)$. The following propositions hold.

1) $a\in\mbox{co}(V)$.

2) $f_1(b_1), f_2(b_1)\in\mbox{co}(V)$.

3) Either $w_0\in [0,1]$ or $w_0\in\mbox{co}\{0,z_2,z_3\}$.

4) $f_2(w_k)\in\mbox{co}(V)$ for all integers $k\geq 0$.

5) $f_1(w_k)\in\mbox{co}(V)$ for all integers $k\geq 0$.
\end{lemma}

\begin{proof}
1) Since $a=c^{-1}z_0$ and $c>1$, one has $a\in \mbox{co}(V)$ by $0,z_0\in \mbox{co}(V)$.

2) Since
$$f_{1}(b_1)=a(a+c|a|^4)=a^2+c|a|^4a=(1-|a|^4)z_1+|a|^4z_0,$$
we have $f_{1}(b_1)\in\mbox{co}(V)$.

Note that
\begin{eqnarray*}
f_2(b_1)&=&1-\bar{a}(a+c|a|^4)=1-|a|^2-c|a|^4\bar{a}\\
&=& (1-t)(1-|a|^2)+t(1-|a|^2-c|a|^4t^{-1}\bar{a}),
\end{eqnarray*}
where
$$t=\frac{|a|^4}{1-|a|^4+|a|^6}.$$
We easily verify $t\in(0,1)$. Thus, to prove $f_2(b_1)\in\mbox{co}(V)$, it suffices to show
$$1-|a|^2-c|a|^4t^{-1}\bar{a}\in\mbox{co}(V).$$
In fact, by the choice of $t$ and $a+\bar{a}=1=c(1-|a|^4)$, we have
\begin{eqnarray*}
&&1-|a|^2-c|a|^4t^{-1}\bar{a}=1-|a|^2-c(1-|a|^4+|a|^6)\bar{a}\\
&=&1-|a|^2-\bar{a}-c|a|^6\bar{a}=a-|a|^2-c|a|^6\bar{a}=a^2-c|a|^6\bar{a}\\
&=&a^2-c|a|^4a\bar{a}^2=a^2-c|a|^4a(1-a)^2\\
&=& (1+2c|a|^4)a^2-c|a|^4a-c|a|^4a^3\\
&=& c(1+|a|^4)a^2-c|a|^4a-c|a|^4a^3\\
&=& c(1-|a|^2+|a|^4)a^2+c|a|^2a^2-c|a|^4a-c|a|^4a^3\\
&=& (1-|a|^2+|a|^4)z_1+c|a|^2a(a-|a|^2)-c|a|^4a^3\\
&=& (1-|a|^2+|a|^4)z_1+c|a|^2a^3-c|a|^4a^3\\
&=& (1-|a|^2+|a|^4)z_1+(|a|^2-|a|^4)z_2\in\mbox{co}(V).
\end{eqnarray*}

3) By the definition, $w_0=1-c|a|^2$. One has $w_0\in[0,1]$ if $w_0\geq0$.

Next we suppose $w_0<0$. This assumption implies $$1-2|a|^2<1-|a|^2-|a|^4=c^{-1} w_0<0,$$
which yields $\cos\eta<1/\sqrt{2}$. Therefore the case $w_0<0$ is possible only if $\eta\in(\pi/4,\pi/3)$. For such $\eta$, it is clear that $z_2$ lies in the third quadrant and that $z_3$ lies in the second quadrant. We are going to show $w_0\in \mbox{co}\{0,z_2,z_3\}$. Let
$$
t=\frac{1}{|a|^2}-1.
$$
one has $t\in(0,1)$ and
\begin{eqnarray*}
(1-t)z_2+tz_3 &=& ca^3(1-t+ta)=ca^3(1-t\bar{a}) \\
&=& ca^2(a-t|a|^2)=ca^2(a-1+|a|^2)\\
&=& ca^2(-\bar{a}+|a|^2)=-ca^2\bar{a}^2=-c|a|^4.
\end{eqnarray*}
This shows that the line segment $[z_2,z_3]$ intersects the real axis at $-c|a|^4$. On the other hand, we have
$$w_0+c|a|^4=1-c|a|^2+c|a|^4=1-\frac{|a|^2-|a|^4}{1-|a|^4}=\frac{1}{1+|a|^2}>0,$$
and hence $w_0\in[-c|a|^4,0]$. The above facts yield $w_0\in \mbox{co}\{0,z_2,z_3\}$.

4) We are required to show $f_2(w_k)\in\mbox{co}(V)$ for all integers $k\geq 0$.

First, $f_2(w_0)\in\mbox{co}(V)$. In fact, by 3), either $w_0\in[0,1]$ or $w_0\in \mbox{co}\{0,z_2,z_3\}$. If $w_0\in[0,1]$, we have by $1, a\in\mbox{co}(V)$
$$f_2(w_0)=1-\bar{a}w_0=(1-w_0)+w_0a\in\mbox{co}(V).$$
If $w_0\in \mbox{co}\{0,z_2,z_3\}$, we have
$$f_2(w_0)\in\mbox{co}\{f_2(0),f_2(z_2),f_2(z_3)\}=\mbox{co}\{1,w_2,w_3\}\subset\mbox{co}(V).$$

Secondly, $f_2(w_1)\in\mbox{co}(V)$ since $f_2(w_1)=b_1$.

Thirdly, $f_2(w_k)\in\mbox{co}(V)$ for every integer $k\geq 2$ since
$$
f_2(w_k)=1-\bar{a}(1-c|a|^2a^k)=a+c|a|^4a^{k-1}=(1-|a|^4)z_0+|a|^4z_{k-2}.
$$

5) We are required to show $f_1(w_k)\in\mbox{co}(V)$ for all integers $k\geq 0$.

If $w_0\in[0,1]$, we have by $0,a\in\mbox{co}(V)$ $$f_1(w_0)=w_0a\in\mbox{co}(V).$$
If $w_0\in \mbox{co}\{0,z_2,z_3\}$, we have
$$f_1(w_0)\in\mbox{co}\{f_1(0),f_1(z_2),f_1(z_3)\}=\mbox{co}\{0,z_3,z_4\}\subset\mbox{co}(V).$$
This proves $f_1(w_0)\in\mbox{co}(V)$. Now, by using Lemma \ref{ylw} and $f_2(w_k)\in\mbox{co}(V)$, we inductively get $f_1(w_k)\in\mbox{co}(V)$ for all integers $k\geq 0$.
\end{proof}

\begin{lemma}\label{cons}
Let $\eta\in(0,\pi/3)$. Then $f_1(V)\cup f_2(V)\subset\mbox{co}(V)$.
\end{lemma}

\begin{proof}
By the definitions, we have
$$f_1(V)=\{f_{1}(b_1)\}\cup\{z_k: k\geq 1\}\cup \{f_{1}(w_k):k\geq 1\}$$
and $$f_2(V)=\{f_{2}(b_1)\}\cup\{w_k: k\geq0\}\cup \{f_2(w_k): k\geq 1\}.$$
Then we get the desired result by Lemma \ref{1L2.1}.
\end{proof}

\noindent{\bf The Proof of Theorem \ref{tm1}.} Let $\eta\in(0,\pi/3)$. Since $V\subset K_\eta$, one has $\mbox{co}(V)\subseteq \mbox{co}(K_\eta)$. On the other hand, by Lemma \ref{L2.1} and Lemma \ref{cons}, $\mbox{co}(V)$ is a closed polygon with $f_1(\mbox{co}(V))\cup f_2(\mbox{co}(V))\subset\mbox{co}(V)$. Since $K_\eta$ is the attractor of the IFS $\{f_1,f_2\}$, we then get $K_\eta\subseteq\mbox{co}(V)$, which yields $\mbox{co}(K_\eta)\subseteq\mbox{co}(V)$. Thus $\mbox{co}(K_\eta)=\mbox{co}(V)$, so $\mbox{co}(K_\eta)$ is a polygon.

\section{The Desired Sequence $\{\eta_k\}_{k=4}^\infty$}

We define a sequence of functions on $(0,{\pi}/{3})$ by
$$\Phi_{k}(\eta)=(1-|a|^4)\sin (k-1)\eta-|a|^3\sin(k-2)\eta+|a|^{k}\sin\eta,$$
where $|a|=(2\cos\eta)^{-1}$. Since $$|a|^2\sin (k-1)\eta=|a|^3\sin(k-2)\eta+|a|^3\sin k\eta,$$
the function $\Phi_{k}$ can be equivalently defined by
$$\Phi_{k}(\eta)= (1-|a|^2-|a|^4)\sin (k-1)\eta+|a|^3\sin k\eta+|a|^{k}\sin\eta.$$
In this section, we prove that, for every integer $k\geq 4$, the function $\Phi_{k}(\eta)$ has a unique root (the desired $\eta_k$) in the interval $(\pi/k, \pi/(k-1))$. The following results motivate the definition of $\Phi_{k}(\eta)$.

\begin{lemma}\label{Im}
Let $k\geq 1$ be an integer. For every $\eta\in(0,\pi/3)$ we have
$$
\Phi_{k}(\eta)\asymp\mbox{Im}\frac{w_1-z_{k}}{z_{k-1}-z_k}.
$$
Hereafter we write $u\asymp v$ if $u=cv$ for some $c>0$.
\end{lemma}
\begin{proof}
Let $k\geq1$ be an integer and $\eta\in(0,\pi/3)$.
By the definitions of $a$, $c$, $z_k$, and $w_k$ in Section 1, we have
$$z_{k-1}-z_k\asymp a^k-a^{k+1}=a^k(1-a)=a^k\bar{a}\asymp a^{k-1}$$
and
\begin{eqnarray*}
\mbox{Im}\frac{w_1-z_{k}}{z_{k-1}-z_k}&\asymp& \mbox{Im}((\bar{z}_{k-1}-\bar{z}_k)(w_1-z_k))=\mbox{Im}((\bar{z}_{k-1}-\bar{z}_k)(w_1-z_{k-1}))\\
&\asymp&\mbox{Im}(\bar{a}^{k-1}(1-c|a|^2a-ca^k))\\
&\asymp & \mbox{Im}(\bar{a}^{k-1}-c|a|^4\bar{a}^{k-2}-c|a|^{2k-2}a))\\
&\asymp&\sin (k-1)\eta-c|a|^3\sin(k-2)\eta+c|a|^k\sin\eta\\
&\asymp&(1-|a|^4)\sin (k-1)\eta-|a|^3\sin(k-2)\eta+|a|^{k}\sin\eta.
\end{eqnarray*}
This completes the proof. \end{proof}

For $u,v,w\in \mathbb{C}$ let $$\angle uvw=\arg \frac{w-v}{u-v}.$$ Note that $\arg z$ denotes the argument of $z$ in $[0,2\pi)$. By the definition, $\angle uvw$ is the anticlockwise angle from  $u-v$ to $w-v$ and we have that
\begin{equation}\label{ag}
\angle uvw\in(0,\pi)\Leftrightarrow \mbox{Im}\frac{w-v}{u-v}>0.
\end{equation}

\begin{cor}\label{sp}
For every $\eta\in(0,\pi/3)$ we have $\angle z_2z_3w_1\in(0,\pi)$.
\end{cor}

\begin{proof} By the definition, $\Phi_3(\eta)=(1-|a|^4)\sin 2\eta>0$ for every $\eta\in(0,\pi/3)$. Then,
by Lemma \ref{Im} and (\ref{ag}), we have $\angle z_2z_3w_1\in(0,\pi)$.
\end{proof}

\begin{lemma}\label{L1.2}
For every integer $k\geq 4$ the function $\Phi_{k}(\eta)$ has a unique root in the interval $(\pi/k, \pi/(k-1))$.
\end{lemma}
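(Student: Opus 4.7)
The plan is a standard IVT-plus-monotonicity argument. First I evaluate $\Phi_k$ at the two endpoints of $(\pi/k, \pi/(k-1))$ and show the signs are opposite (up to boundary subtleties for $k=4$). Then I secure uniqueness by proving that every zero of $\Phi_k$ in the open interval is a \emph{downward crossing}, i.e.\ $\Phi_k'(\eta_0) < 0$, which rules out a second zero.

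For the endpoint analysis I use $\sin((k-1)\pi/k) = \sin(\pi/k)$, $\sin((k-2)\pi/k) = 2\sin(\pi/k)\cos(\pi/k)$, and the defining relation $2|a|\cos\eta = 1$ to obtain
$$\Phi_k(\pi/k) = \sin(\pi/k)\,\bigl(1 - |a|^2 - |a|^4 + |a|^k\bigr),$$
which is strictly positive since $k \geq 4$ forces $|a| \leq 1/\sqrt{2}$ and hence $1-|a|^2-|a|^4 \geq 1/4$. At the right endpoint $\sin((k-1)\eta) = 0$ collapses the expression to $|a|^3\sin(\pi/(k-1))(|a|^{k-3}-1)$, which is strictly negative for $k \geq 5$ because then $|a| < 1$. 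For $k = 4$ the point $\pi/(k-1) = \pi/3$ lies on the boundary of the admissible range and the value extends continuously to $0$; instead a first-order Taylor expansion (using $d|a|^n/d\eta = n|a|^n\tan\eta$) gives $\Phi_4(\eta) = 3(\eta - \pi/3) + O((\eta - \pi/3)^2)$, so $\Phi_4 < 0$ strictly just below $\pi/3$. In either case the intermediate value theorem supplies a zero in the open interval.

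For uniqueness, if two zeros $\eta_1 < \eta_2$ existed then $\Phi_k'(\eta_1) < 0$ would force $\Phi_k < 0$ just past $\eta_1$, and approaching $\eta_2$ from below would demand $\Phi_k'(\eta_2) \geq 0$, contradicting the downward-crossing claim. To verify $\Phi_k'(\eta_0) < 0$, I differentiate $\Phi_k$ and substitute the zero relation $(1-|a|^4)\sin((k-1)\eta_0) = |a|^3\sin((k-2)\eta_0) - |a|^k\sin\eta_0$ to eliminate $\sin((k-2)\eta_0)$ from $\Phi_k'(\eta_0)$; a clean cancellation reduces the tangent-coefficient to $-(3+|a|^4)\sin((k-1)\eta_0) + (k-3)|a|^k\sin\eta_0$. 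Expanding $\cos((k-2)\eta_0) = \cos((k-1)\eta_0)\cos\eta_0 + \sin((k-1)\eta_0)\sin\eta_0$ and using $\tan\eta = 2|a|\sin\eta$, $|a|^3\cos\eta_0 = |a|^2/2$, $|a|^k\cos\eta_0 = |a|^{k-1}/2$ bundles the dominant contributions into the manifestly negative term $-|a|\sin\eta_0\sin((k-1)\eta_0)\bigl(6 + 2|a|^4 + (k-2)|a|^2\bigr)$, which I then show dominates the remaining contributions.

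The main obstacle is this final sign bound. After the reduction $\Phi_k'(\eta_0)$ splits into the big negative piece above plus the non-negative pieces $2(k-3)|a|^{k+1}\sin^2\eta_0 + |a|^{k-1}/2$ and a subsidiary term $\cos((k-1)\eta_0)\bigl((k-1)(1-|a|^4) - (k-2)|a|^2/2\bigr)$ whose coefficient can switch sign as $|a|$ varies across the interval (positive for $|a| < 1/\sqrt{2}$, negative near $|a| = 1$), so the estimate must be robust in both regimes. I expect to close the bound by exploiting the explicit interval bounds on $|a|$ together with $\sin((k-1)\eta_0) < \sin(\pi/k)$ and $|\cos((k-1)\eta_0)| \leq 1$; should this direct derivative route become unwieldy, an attractive alternative is to analyze the smoother quotient $\Phi_k(\eta)/\sin((k-1)\eta)$ on $(\pi/k, \pi/(k-1))$ and show via logarithmic differentiation that it is strictly decreasing through its unique zero.
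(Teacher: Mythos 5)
Your existence half is sound and essentially matches the paper: the endpoint values $\Phi_k(\pi/k)=(1-|a|^2-|a|^4+|a|^k)\sin(\pi/k)>0$ and $\Phi_k(\pi/(k-1))=(|a|^k-|a|^3)\sin(\pi/(k-1))<0$ for $k\ge5$ are exactly what the paper computes, and your observation that the right endpoint degenerates for $k=4$ (where $|a|\to1$ and the value tends to $0$), repaired by the expansion $\Phi_4(\eta)=3(\eta-\pi/3)+O((\eta-\pi/3)^2)$, is correct --- I checked $\Phi_4'(\pi/3)=3$. The problem is the uniqueness half, which is where the entire technical content of the lemma lives, and which you have not proved. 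Your ``downward crossing'' claim $\Phi_k'(\eta_0)<0$ at every zero rests on an unverified algebraic reduction (``a clean cancellation reduces the tangent-coefficient to \dots'') followed by a final sign bound that you explicitly defer (``I expect to close the bound \dots should this direct derivative route become unwieldy \dots''). As written this is a plan with a named obstacle, not a proof, and the fallback via the quotient $\Phi_k(\eta)/\sin((k-1)\eta)$ is likewise only a suggestion.

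For comparison, the paper closes exactly this gap by proving the stronger statement that $\Phi_k'(\eta)<0$ on all of $(\pi/k,\pi/(k-1))$ for $k\ge5$, with no need to localize at zeros: using $d|a|/d\eta=2|a|^2\sin\eta$, the monotonicity of cosine to get $\cos(k-1)\eta<-\cos\eta<\cos(k-2)\eta$ on the interval, and the elementary bound $(k-1)\sin^2\eta\le2$ (which packages the tail terms as $2k|a|^{k+1}\sin^2\eta+|a|^k\cos\eta\le12|a|^{k+2}\cos\eta$), it arrives at $\Phi_k'(\eta)/\cos\eta<-(k-1)(1-|a|^4)+(k-2)|a|^3+12|a|^7$, which is negative because $|a|<1/\sqrt2$ there. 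For $k=4$ the paper avoids derivatives altogether via the identity $\Phi_4(\eta)=(1-|a|^2)(1-2|a|^4)\sin\eta/|a|^2$: the sign of $\Phi_4$ equals the sign of $1-2|a|^4$, which is strictly decreasing since $|a|$ is strictly increasing in $\eta$, giving existence and uniqueness in one stroke. I would recommend adopting that factorization for $k=4$ (it eliminates both your boundary Taylor expansion and the delicate $k=4$ derivative analysis) and supplying explicit estimates of the paper's type for $k\ge5$; until the derivative bound is actually carried out, the lemma is not proved.
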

\begin{proof} We first show that $\Phi_4(\eta)$ has a unique root in $(\pi/4, \pi/3)$. In fact,
\begin{eqnarray*}
\Phi_4(\eta)&=& (1-|a|^4)\sin 3\eta-|a|^3\sin 2\eta+|a|^4\sin\eta \\
&\asymp& (1-|a|^4)\frac{\sin 3\eta}{\sin\eta}-|a|^2+|a|^4\\
&\asymp& (1-|a|^4)\frac{1-|a|^2}{|a|^2}-|a|^2+|a|^4\\
&\asymp&\frac{1-|a|^4}{|a|^2}-|a|^2\asymp 1-2|a|^4\asymp8\cos^4\eta-1.
\end{eqnarray*}
Therefore $\Phi_4(\eta)=0$ if and only if $8\cos^4\eta=1$, which has a unique root in $(\pi/4, \pi/3)$. Denote this root by $\eta_4$.
Then one has
\begin{equation}\label{eta4}
2|a|^4<1  \mbox{ for }  \eta\in(0,\eta_4)
\end{equation}
and
\begin{equation}\label{t4}
2|a|^4\geq1 \mbox{ for } \eta\in[\eta_4,\pi/3).
\end{equation}

Now we show that $\Phi_{k}(\eta)$, $k\geq 5$, has a unique root in $(\pi/k, \pi/(k-1))$. By the definition, we easily verify 
$$\Phi_{k}(\frac{\pi}{k})>0\,\mbox{ and }\,\Phi_{k}(\frac{\pi}{k-1})<0.$$
Next we show that the derivative $\Phi'_{k}(\eta)<0$ for all $\eta\in(\pi/k,\pi/(k-1))$.

By $2|a|\cos\eta=1$, one has $d|a|/d\eta=2|a|^2\sin\eta$. Thus
\begin{eqnarray*}
\Phi'_{k}(\eta)&=&-8|a|^5\sin\eta\sin (k-1)\eta +(k-1)(1-|a|^4)\cos (k-1)\eta\\ &&
-6|a|^4\sin\eta\sin(k-2)\eta -(k-2)|a|^3\cos(k-2)\eta\\ &&+2k|a|^{k+1}\sin^2\eta+|a|^{k}\cos\eta.
\end{eqnarray*}
For $\eta\in(\pi/k,\pi/(k-1))$ one has $\sin (k-1)\eta>0$, $\sin(k-2)\eta>0$,
$$\cos (k-1)\eta<\cos\frac{(k-1)\pi}{k}=-\cos\frac{\pi}{k}<-\cos\eta,$$
and
$$\cos (k-2)\eta>\cos\frac{(k-2)\pi}{k-1}=-\cos\frac{\pi}{k-1}>-\cos\eta.$$
Moreover, for $k\geq 5$ and $\eta\in(\pi/k,\pi/(k-1))$ one has $k\sin^2\eta+\cos^2\eta<3$. In fact, if $k=5$, one has $$k\sin^2\eta+\cos^2\eta=1+4\sin^2\eta<1+4\sin^2\frac{\pi}{4}=3.$$ If $k\geq 6$, one has $$k\sin^2\eta+\cos^2\eta=1+(k-1)\sin^2\eta\leq1+(k-1)(\frac{\pi}{k-1})^2\leq1+\frac{\pi^2}{5}<3.$$
In addition, one has $2|a|^2<1$. These facts yield
\begin{eqnarray*}
\Phi'_{k}(\eta)&<&(-(k-1)(1-|a|^4)+(k-2)|a|^3)\cos\eta+2|a|^6(k\sin^2\eta+\cos^2\eta)\\
&<&  (-(k-1)(1-|a|^4)+(k-2)|a|^3)\cos\eta+6|a|^6\\
&\asymp&-(k-1)(1-|a|^4)+(k-2)|a|^3+12|a|^7\\
&<& -\frac{3(k-1)}{4}+\frac{k-2}{2\sqrt{2}}+\frac{3}{2\sqrt{2}}\\
&\leq&-\frac{(3-\sqrt{2})(k-1)}{4}+\frac{1}{\sqrt{2}}\\
&<&-3+\sqrt{2}+\frac{1}{\sqrt{2}}<0.
\end{eqnarray*}
It then follows that $\Phi_k(\eta)$ is strictly decreasing on $(\pi/k, \pi/(k-1))$. This proves that $\Phi_k(\eta)$  has a unique root in this interval.
\end{proof}

Let $\eta_3=\pi/3$. For every integer $k\geq 4$ let $\eta_k$ be the unique root of $\Phi_k(\eta)$ in $(\pi/k, \pi/(k-1))$. Then we have the decomposition
\begin{equation}\label{par}
(0,\frac{\pi}{3})=\bigcup_{k=3}^\infty[\eta_{k+1},\eta_k).
\end{equation}

\begin{cor}\label{sp1}
Let $k\geq 4$ be an integer and $\eta\in (0, \eta_k)$. Then we have $\angle z_{k-1}z_kw_1\in (0,\pi)$.
%$$\angle z_{k-1}z_kw_1\in\left\{
%\begin{array}{lll}
%(0,\pi), & \hbox{if $\eta\in (\pi/k, \eta_k)$} \\ \\
%\pi, & \hbox{if $\eta=\eta_k$} \\ \\
%(\pi,2\pi), & \hbox{if $\eta\in (\eta_k,\pi/(k-1))$.}
%\end{array}
%\right.$$
\end{cor}
\begin{proof}
In the case $\eta\in[\pi/k,\eta_k)$, one has $\Phi_k(\eta)>0$ by the proof of Lemma \ref{L1.2}. In the case $\eta\in(0,{\pi}/{k})$, one has
$$1-|a|^2-|a|^4>0,\ \sin (k-1)\eta>0,\, \mbox{ and }\,\sin k\eta>0,$$
which implies
$$
\Phi_{k}(\eta)=(1-|a|^2-|a|^4)\sin (k-1)\eta+|a|^3\sin k\eta+ |a|^k\sin\eta>0.$$
This proves $\Phi_k(\eta)>0$ for $\eta\in (0, \eta_k)$. It then follows from Lemma \ref{Im} and (\ref{ag}) that $\angle z_{k-1}z_{k}w_1\in(0,\pi)$.
\end{proof}

\section{The Convex Hull of $V_k$}

Let $k\geq 3$ be an integer and $\eta\in[\eta_{k+1},\eta_k)$. In this section, we verify that the broken line $[b_1,z_0,z_1,\cdots, z_k,w_1,w_2,\cdots, w_k,b_1]$ is a simple closed path with a clockwise orientation, whose interior angle at each of the listed points is less than $\pi$. This shows that $\mbox{co}(V_k)$ is a polygon of vertices $$b_1,z_0,z_1,\cdots,z_k, w_1,w_2,\cdots,w_k$$ in clockwise order.
For the purpose the following general result is needed.
\begin{lemma}\label{q0}
Let $\eta\in(0,\pi/3)$. The following propositions hold.

1) $\mbox{Re} z_3<\mbox{Re} b_1$ and $\mbox{Re} z_3<\mbox{Re} w_1<\mbox{Re} w_2$.

2) $\mbox{Im} b_1<\mbox{Im} z_3<\mbox{Im} w_1=\mbox{Im} w_2$.

3) $\angle z_2z_3w_1,\,\angle z_3w_1w_2,\, \angle w_3b_1z_0,\, \angle z_3w_1b_1,\, \angle w_1b_1z_0\in(0,\pi)$.

4) $\angle b_1z_0z_1=\angle z_jz_{j+1}z_{j+2}=\angle w_jw_{j+1}w_{j+2}=\pi-\eta$ for $j\geq 0$.

5) $[w_1,b_1]$ intersects the real axis at $(1+c^2|a|^6)/(1+c|a|^2)$.

6) $[w_1,z_1]$ intersects the real axis at $(1-c|a|^4)/(1+|a|^2)$.
\end{lemma}
\begin{proof}

1) Since $b_1=a+c|a|^4$ and $z_k=ca^{k+1}$, we have
$$\mbox{Re}b_1-\mbox{Re}z_k=|a|\cos\eta+c|a|^4 -c|a|^4\cos4\eta>0,$$
Thus $\mbox{Re} z_3<\mbox{Re} b_1$.

Since $w_j=1-c|a|^2a^j$, we have
\begin{equation}\label{ri}
w_2-w_1=c|a|^2a(1-a)=c|a|^4,
\end{equation}
which gives $\mbox{Re} w_2>\mbox{Re} w_1$.

Finally, since
\begin{eqnarray*}
\mbox{Re}w_1-\mbox{Re} z_3&=&1-c|a|^3\cos\eta-c|a|^4\cos4\eta\\
&=&1-c|a|^3\cos\eta-\frac{c}{2}(1-4|a|^2+2|a|^4)\\
&=&1-\frac{c|a|^2}{2}-\frac{c}{2}+2c|a|^2-c|a|^4\\
&\asymp&2-c+3c|a|^2-2c|a|^4\\
&\asymp&1+3|a|^2-4|a|^4>0,
\end{eqnarray*}
we get $\mbox{Re} z_3<\mbox{Re} w_1$.

\medskip

2) By (\ref{ri}), we have $\mbox{Im} w_2=\mbox{Im} w_1$.

Since
\begin{eqnarray*}
\mbox{Im} w_1-\mbox{Im} z_3&=&c|a|^3\sin\eta+c|a|^4\sin4\eta\asymp\sin2\eta+\sin4\eta\\
&\asymp& 1+2\cos2\eta=4\cos^2\eta-1\asymp1-|a|^2>0,
\end{eqnarray*}
we have $\mbox{Im} z_3<\mbox{Im} w_1$.

Finally, since
\begin{eqnarray*}
\mbox{Im}b_1-\mbox{Im}z_3&=& -|a|\sin\eta+c|a|^4\sin4\eta \\
&\asymp& -1+|a|^4+|a|^3\frac{\sin4\eta}{\sin\eta}\\
&=& |a|^4-2|a|^2<0,
\end{eqnarray*}
we get $\mbox{Im}b_1<\mbox{Im}z_3$.

\medskip

3) By Corollary \ref{sp}, we have $\angle z_2z_3w_1\in(0,\pi)$.

By 1) and 2), we have $\mbox{Re} w_1<\mbox{Re} w_2$ and $\mbox{Im} z_3<\mbox{Im} w_1=\mbox{Im} w_2$, and hence $\angle z_3w_1w_2\in(0,\pi)$.

Since $f_2$ preserves angles, we get $\angle w_3b_1z_0=\angle z_3w_1w_2\in(0,\pi)$.

By 1) and 2), we have $\mbox{Re}z_3<\mbox{Re}b_1$, $\mbox{Re}z_3<\mbox{Re}w_1$ and $\mbox{Im}b_1<\mbox{Im}z_3<\mbox{Im}w_1$, and hence $\angle z_3w_1b_1\in(0,\pi)$.

Since
\begin{eqnarray*}
&&\mbox{Im}\frac{z_0-b_1}{w_1-b_1}\asymp\mbox{Im}((\bar{w}_1-\bar{b}_1)(z_0-b_1))=\mbox{Im}(z_0(\bar{w}_1-\bar{b}_1)-\bar{w}_1b_1)\\
&=& \mbox{Im}(ca(1-c|a|^2\bar{a}-\bar{a}-c|a|^4)-(1-c|a|^2\bar{a})(a+c|a|^4))\\
&=& \mbox{Im}(ca-c^2|a|^4a-a+c^2|a|^6\bar{a})\\
&=& -(c-1)|a|\sin\eta +c^2|a|^5\sin\eta+c^2|a|^7\sin\eta\\
&\asymp& 1-c +c^2|a|^4+c^2|a|^6=-c|a|^4+c^2|a|^4+c^2|a|^6\\
&\asymp& c-1+c|a|^2=c|a|^4+c|a|^2>0,
\end{eqnarray*}
we have $\angle w_1b_1z_0\in(0,\pi)$ by (\ref{ag}).

\medskip

4) By $a+\bar{a}=1$, one has $$
\frac{z_1-z_0}{b_1-z_0}=\frac{c(a^2-a)}{a+c|a|^4-ca}=\frac{a^2-a}{(1-|a|^4)a+|a|^4-a}=-\frac{a}{|a|^4}$$ and $$\frac{z_{j+2}-z_{j+1}}{z_j-z_{j+1}}=\frac{a^2-a}{1-a}
=-a.
$$
Therefore
$$
\angle b_1z_0z_1=\angle z_jz_{j+1}z_{j+2}=\pi-\eta.
$$
Since $f_2$ preserves angles, we have $\angle w_jw_{j+1}w_{j+2}=\angle z_jz_{j+1}z_{j+2}=\pi-\eta$.

\medskip

5) For $t\in[0,1]$ we have
\begin{eqnarray*}
\mbox{Im}(tw_1+(1-t)b_1) &=&\mbox{Im}(-tc|a|^2a+(1-t)a) \\
&=& tc|a|^3\sin\eta-(1-t)|a|\sin\eta\\
&\asymp& tc|a|^2+t-1.
\end{eqnarray*}
Thus, $\mbox{Im}(tw_1+(1-t)b_1)=0$ if and only if $t=1/(1+c|a|^2)$. For this value of $t$, we have by (\ref{eta4})
$$
tw_1+(1-t)b_1 = \frac{1-c|a|^2a+c|a|^2(a+c|a|4)}{1+c|a|^2} =\frac{1+c^2|a|^6}{1+c|a|^2}.$$
Therefore $[w_1,b_1]$ intersects the real axis at $(1+c^2|a|^6)/(1+c|a|^2)$.

6) By $2|a|\cos\eta=1$, we have for $t\in[0,1]$
\begin{eqnarray*}
\mbox{Im}(tw_1+(1-t)z_1) &=&\mbox{Im}(-tc|a|^2a+(1-t)ca^2) \\
&=& tc|a|^3\sin\eta-(1-t)c|a|^2\sin2\eta\\
&\asymp& t|a|^2+t-1.
\end{eqnarray*}
Thus, $\mbox{Im}(tw_1+(1-t)z_1)=0$ if and only if $t=1/(1+|a|^2)$. For this value of $t$, we have by $a+\bar{a}=1$ and (\ref{eta4})
$$tw_1+(1-t)z_1=\frac{1-c|a|^2a+c|a|^2a^2}{1+|a|^2}=\frac{1-c|a|^4}{1+|a|^2}.$$
Therefore $[w_1,z_1]$ intersects the real axis at $(1-c|a|^4)/(1+|a|^2)$.
\end{proof}

\begin{lemma}\label{q1}
For every $\eta\in(\pi/4,\pi/3)$ we have  $\angle w_10z_3\in(0,\pi)$.
\end{lemma}
\begin{proof}
For $\eta\in(\pi/4,\pi/3)$ we have $\mbox{Im} z_3=-c|a|^4\sin4\eta>0$. It then follows from
$\mbox{Im}w_1=c|a|^3\sin\eta>0$ and $\mbox{Re} z_3<\mbox{Re} w_1$ that  $\angle w_10z_3\in(0,\pi)$.
\end{proof}

\begin{lemma}\label{01}
\label{q2} Let $k\geq 4$ be an integer and $\eta\in[\eta_{k+1},\eta_k)$. We have the following results.

1) $\mbox{Re} z_k<0<\mbox{Im} z_k$.

2) $\mbox{Re} z_k<\mbox{Re} w_1$ and $\mbox{Im} z_k<\mbox{Im} w_1$.

3) $\angle w_10z_k,\, \angle z_{k-1}z_kw_1,\, \angle z_kw_1w_2,\, \angle w_kb_1z_0,\, \angle z_kw_1b_1\in(0,\pi)$.
\end{lemma}
\begin{proof} 1) By the assumption, one has $\pi<(k+1)\eta<\pi+2\eta<2\pi$. Thus
$$\mbox{Im} z_k=-c|a|^{k+1}\sin(k+1)\eta>0.$$

Next we prove $\mbox{Re} z_k<0$. Clearly,
$$\mbox{Re} z_k=c|a|^{k+1}\cos(k+1)\eta.$$

If $k=4$, one has $\pi<5\eta<5\eta_4<2\pi$, so $\cos5\eta<\cos5\eta_4$. Since $8\cos^4\eta_4=1$ by the definition of $\eta_4$ in the proof of Lemma \ref{L1.2}, one has
\begin{eqnarray*}
&&\cos5\eta_4 =16\cos^5\eta_4-20\cos^3\eta_4+5\cos\eta_4 \\
&=&7\cos\eta_4-\frac{5}{2\cos\eta_4}\asymp 14\cos^2\eta_4-5=\frac{14}{\sqrt{8}}-5<0.
\end{eqnarray*}
Thus $\cos5\eta<0$ and we get $\mbox{Re} z_4<0$.

If $k\geq 5$, one has $\pi<(k+1)\eta<3\pi/2$, which implies $\cos(k+1)\eta<0$, and hence $\mbox{Re} z_k<0$.

2) By $\pi<(k+1)\eta<\pi+2\eta<2\pi$, one has $\cos(k+1)\eta<-\cos2\eta$. Thus
\begin{eqnarray*}
\mbox{Re} w_1-\mbox{Re} z_k &=& 1-c|a|^3\cos\eta-c|a|^{k+1}\cos(k+1)\eta\\
&>& 1-c|a|^3\cos\eta+c|a|^{k+1}\cos2\eta\\
&=& 1-c|a|^3\cos\eta+2c|a|^{k+1}\cos^2\eta-c|a|^{k+1}\\
&=& 1-\frac{c|a|^2}{2}+\frac{c|a|^{k-1}}{2}-c|a|^{k+1}\\
&\asymp& 1-|a|^4-\frac{|a|^2}{2}+\frac{|a|^{k-1}}{2}-|a|^{k+1}\\
&\asymp& 2-|a|^2-2|a|^4+|a|^{k-1}(1-2|a|^2)
\end{eqnarray*}
If $2|a|^2\leq 1$, one has
$$2-|a|^2-2|a|^4+|a|^{k-1}(1-2|a|^2)\geq 3|a|^2-2|a|^4>0.$$
If $2|a|^2>1$, one has by (\ref{eta4})
\begin{eqnarray*}
&&2-|a|^2-2|a|^4+|a|^{k-1}(1-2|a|^2)\\
&>& 2-|a|^2-2|a|^4+|a|^3(1-2|a|^2) \\
&=&  2-|a|^2+|a|^3-2|a|^4-2|a|^5\\
&>&  1-|a|-|a|^2+|a|^3\\
&=&(1-|a|)(1-|a|^2)>0.
\end{eqnarray*}
This proves $\mbox{Re} w_1>\mbox{Re} z_k$.

Next we prove $\mbox{Im} z_k<\mbox{Im} w_1$. If $k=4$, one has
\begin{eqnarray*}
\mbox{Im} w_1-\mbox{Im} z_4&\asymp&1+|a|^{2}\frac{\sin5\eta}{\sin\eta}=1+\frac{1-3|a|^2+|a|^4}{|a|^2}\\&\asymp& 1-2|a|^2+|a|^4>0.
\end{eqnarray*}
If $k\geq 5$, one has $\pi<(k+1)\eta<\pi+2\eta<3\pi/2$, so $\sin(k+1)\eta>-\sin2\eta$, by which we have
\begin{eqnarray*}
\mbox{Im} w_1-\mbox{Im} z_k&\asymp&\sin\eta+|a|^{k-2}\sin(k+1)\eta\\
&>& \sin\eta-|a|^{k-2}\sin2\eta \\
&\asymp& 1-|a|^{k-3}>0.
\end{eqnarray*}

3) As we proved in 1) and 2), we have $0<\mbox{Im} z_k<\mbox{Im} w_1$ and $\mbox{Re} z_k<\mbox{Re} w_1$, and hence $\angle w_10z_k\in(0,\pi)$.

By Corollary \ref{sp1}, we have $\angle z_{k-1}z_kw_1\in(0,\pi)$.

By Lemma \ref{q0}, we have $\mbox{Re} w_1<\mbox{Re} w_2$ and $\mbox{Im} w_1=\mbox{Im} w_2$. It then follows from $\mbox{Im} z_k<\mbox{Im} w_1$
that $\angle z_kw_1w_2\in(0,\pi)$.

Since $f_2$ preserves angles, we have $\angle w_kb_1z_0=\angle z_kw_1w_2\in(0,\pi)$.

Since
$$\mbox{Re}b_1-\mbox{Re}z_k=|a|\cos\eta+c|a|^4 -c|a|^{k+1}\cos(k+1)\eta>0,$$
we have $\mbox{Re}z_k<\mbox{Re}b_1$. In addition, we have $\mbox{Im}b_1=-|a|\sin\eta<0$. To sum up, we have
$\mbox{Re}z_k<\mbox{Re}b_1$, $\mbox{Re} z_k<\mbox{Re} w_1$, and $\mbox{Im}b_1<\mbox{Im}z_k<\mbox{Im} w_1$, which yields  $\angle z_kw_1b_1\in(0,\pi)$.
\end{proof}

\begin{lemma}\label{q3}
Let $k\geq 3$ be an integer and $\eta\in[\eta_{k+1},\eta_k)$. Then $\mbox{co}(V_k)$ coincides with the polygon of vertices $b_1,z_0,z_1,\cdots, z_k,w_1,w_2,\cdots, w_k$ in clockwise order and contains $[0,1]$.
\end{lemma}

\begin{proof} We first study the broken line $[b_1,z_0,z_1,\cdots, z_k,w_1,b_1]$. By Lemmas \ref{q1} and \ref{q2}, we have $\angle w_10z_k\in(0,\pi)$. In addition, we have $\mbox{Im}w_1=c|a|^3\sin\eta>0$. Therefore $\arg z_k>\arg w_1$. On the other hand, since $\arg a=2\pi-\eta$, $z_0=ca$, and $b_1=a+c|a|^4$, we have $2\pi>\arg b_1>\arg z_0$.
By combining these two inequalities for the arguments, we easily see that
$$2\pi>\arg b_1>\arg z_0>\arg z_1>\cdots >\arg z_{k-1}>\arg z_k>\arg w_1>0.$$
Now, since $[w_1,b_1]$ intersects the positive real axis by Lemma \ref{q0}, we conclude that $[b_1,z_0,z_1,\cdots, z_k,w_1,b_1]$ is a clockwise-oriented simple closed broken line enclosing $0$. By Lemmas \ref{q0} and \ref{01}, we have
$$\angle b_1z_0z_1, \angle z_jz_{j+1}z_{j+2},\angle z_{k-1}z_kw_1, \angle z_kw_1b_1,\angle w_1b_1z_0\in(0,\pi).$$
That is to say, the interior angle of the broken line $[b_1,z_0,z_1,\cdots, z_k,w_1,b_1]$ at each of the listed points is less than $\pi$. Thus it is a clockwise-oriented convex polygonal broken line.

The last conclusion implies that $[z_1,z_2,\cdots, z_k,w_1,z_1]$ is a clockwise-oriented convex polygonal broken line. Since $f_2$ preserves angles and $$f_2[z_1,\cdots, z_k,w_1,z_1]=[w_1,\cdots, w_k,b_1,w_1],$$ we see that $[w_1,\cdots, w_k,b_1,w_1]$ is also such a polygonal broken line.

The broken lines $[b_1,z_0,z_1,\cdots, z_k,w_1,b_1]$ and $[w_1,w_2,\cdots, w_k,b_1,w_1]$ share a common side $w_1b_1$.
Since they are clockwise-oriented and convex, it follows that $[b_1,z_0,z_1,\cdots, z_k,w_1,w_2,\cdots, w_k,b_1]$ is a clockwise-oriented simple closed broken line. By Lemmas \ref{q0} and \ref{01}, we have
$$\angle z_kw_1w_2, \angle w_kb_1z_0\in(0,\pi),$$
and hence $[b_1,z_0,z_1,\cdots, z_k,w_1,w_2,\cdots, w_k,b_1]$ is a clockwise-oriented convex polygonal broken line. This proves that $\mbox{co}(V_k)$ coincides with the polygon of vertices $b_1,z_0,z_1,\cdots, z_k,w_1,w_2,\cdots, w_k$ in clockwise order.

Next we prove $[0,1]\subset \mbox{co}(V_k)$.
If $k=3$ and $\eta\in [\eta_4,\pi/3)$, one has $2|a|^4\geq 1$ by (\ref{t4}), and hence $c|a|^4\geq 1$, which gives $$\frac{1+c^2|a|^6}{1+c|a|^2}\geq 1.$$ Since $[w_1,b_1]$ intersects the positive real axis at $(1+c^2|a|^6)/(1+c|a|^2)$, we have that $[b_1,z_0,z_1,\cdots, z_k,w_1,b_1]$ encloses $0$ and $1$, and hence $[0,1]\subset \mbox{co}(V_k)$.

If $k\geq4$ and $\eta\in [\eta_{k+1},\eta_k)$, one has $2|a|^4<1$ by (\ref{eta4}), and hence $c|a|^4<1$, which yields
$$\frac{1-c|a|^4}{1+|a|^2}\in(0,1).$$
Since $[w_1,z_1]$ intersects the real axis at $(1-c|a|^4)/(1+|a|^2)$, we see that the broken line $[z_1,z_2,\cdots, z_k,w_1,z_1]$ encloses $0$. By the action of $f_2$, we conclude that $[w_1,w_2,\cdots, w_k,b_1,w_1]$ encloses $1$. Thus $[0,1]\subset \mbox{co}(V_k)$.
\end{proof}

\section{$\mbox{co}(V)=\mbox{co}(V_3)$ for $\eta\in[\eta_4,\pi/3)$}

%Let $k\geq 3$ be an integer and $\eta\in[\eta_{k+1},\eta_k)$. %Lemma \ref{q3} establishes that $\mbox{co}(V_k)$ coincides with the polygon of vertices of $b_1,z_0,z_1,\cdots, z_k, w_1, w_2,\cdots, w_k, b_1$  in $clockwise order.
%To prove $\mbox{co}(K_\eta)=\mbox{co}(V_k)$, it suffices to show $\mbox{co}(V_k)=\mbox{co}(V)$.
In this section we prove that the equality $\mbox{co}(V)=\mbox{co}(V_3)$ holds for $\eta\in[\eta_4,\pi/3)$. For the purpose it suffices to show
\begin{equation}\label{gjw}
\{z_j:j>3\}\cup\{w_j:j>3\}\subset\mbox{co}(V_3).
\end{equation}

\begin{lemma}\label{V31}
For each $\eta\in[\eta_4,\pi/3)$ we have $z_4,z_5,w_4,w_5\in \mbox{co}(V_3)$.
\end{lemma}
\begin{proof}
Let $\eta\in[\eta_4,\pi/3)$. We first show
\begin{equation}\label{z4}
z_4\in \mbox{co}\{0,z_3,w_1,w_2\}.
\end{equation}

On the one hand, we have $0<2\pi-5\eta=\arg z_4<\arg z_3=2\pi-4\eta<\pi$. On the other hand, we have $\sin 5\eta<0$ and $\sin3\eta>0$, and hence
\begin{eqnarray*}
\mbox{Im}\frac{-z_4}{w_2-z_4}&\asymp&\mbox{Im}(-z_4(\overline{w}_2-\overline{z}_4))=\mbox{Im}(-z_4\overline{w}_2)
=\mbox{Im}(-z_4(1-c|a|^2\overline{a}^2)) \\
&\asymp& \mbox{Im}(-a^5+c|a|^6a^3)\asymp\sin5\eta-c|a|^4\sin3\eta<0,
\end{eqnarray*}
which implies $\angle w_2z_40>\pi$. In addition, we have $\Phi_4(\eta)\leq0$ by the proof of Lemma \ref{L1.2}, and hence $\angle z_3z_4w_1\geq\pi$. Finally, we have
\begin{eqnarray*}
\mbox{Im} w_1-\mbox{Im} z_4 &=& c|a|^3\sin\eta+c|a|^5\sin 5\eta \asymp 1+|a|^2\frac{\sin 5\eta}{\sin\eta}\\
&=&1+\frac{1-3|a|^2+|a|^4}{|a|^2}\asymp1-2|a|^2+|a|^4>0,
\end{eqnarray*}
and hence $\mbox{Im} z_4<\mbox{Im} w_1=\mbox{Im} w_2$. These facts imply (\ref{z4}). Thus $z_4\in\mbox{co}(V_3)$.

Since $f_2$ is a similarity map, by applying $f_2$ to the relationship (\ref{z4}), we get
$$w_4\in\mbox{co}\{1,w_3,b_1,z_0\},$$
and hence $w_4\in\mbox{co}(V_3)$.

Similarly, by applying $f_1$ and $f_2\circ f_1$ to (\ref{z4}) respectively, we get
$$z_5\in \mbox{co}\{0,z_4,f_1(w_1),f_1(w_2)\}\, \mbox{ and }\, w_5\in \mbox{co}\{1,w_4,f_2\circ f_1(w_1),f_2\circ f_1(w_2)\}.$$
As was shown, $z_4, w_4\in\mbox{co}(V_3)$. To prove $z_5, w_5\in\mbox{co}(V_3)$, it suffices to show
\begin{equation}\label{ch}
f_1(w_1),\, f_1(w_2),\, f_2\circ f_1(w_1),\, f_2\circ f_1(w_2)\in \mbox{co}(V_3).
\end{equation}

By Lemma \ref{1L2.1}, $w_0\in\mbox{co}\{0,1,z_2,z_3\}$. It then follows that
$$f_1(w_0)\in\mbox{co}\{0,a,z_3,z_4\} \,\mbox{ and }\, f_2(w_0)\in \mbox{co}\{1,a,w_2,w_3\},$$
and hence $f_1(w_0),f_2(w_0)\in \mbox{co}(V_3)$ since $a=c^{-1}z_0\in \mbox{co}(V_3)$. On the other hand, by  Lemma \ref{ylw}, we have
$$
f_1(w_{k+1})=(1-|a|^2)f_1(w_k)+|a|^2f_2(w_k).
$$
By using this formula, we get $f_1(w_1)\in \mbox{co}(V_3)$. Now, since $f_2(w_1)=b_1\in V_3$, we get $f_1(w_2)\in \mbox{co}(V_3)$ by using this formula again. Finally, observing that
$$f_2\circ f_1(w_1)=f_2(a-c|a|^2a^2)=1-\bar{a}(a-c|a|^2a^2)=1-|a|^2+|a|^4z_0$$
and
$$f_2\circ f_1(w_2)=f_2(a-c|a|^2a^3)=1-\bar{a}(a-c|a|^2a^3)=1-|a|^2+|a|^4z_1,$$
we have $f_2\circ f_1(w_1),f_2\circ f_1(w_2)\in \mbox{co}(V_3)$ by using $1, |a|^2z_0,|a|^2z_1\in \mbox{co}(V_3)$. This proves (\ref{ch}), and thus completes the proof.
\end{proof}

\begin{lemma}\label{V32}
For each $\eta\in[\eta_4,\pi/3)$ we have $z_6,w_6\in\mbox{co}(V_3)$.
\end{lemma}
\begin{proof} Let $\eta\in[\eta_4,\pi/3)$ be given. For the purpose it suffices to prove
\begin{equation}\label{wd}
z_6\in \mbox{co}\{0,w_2,z_0\} \mbox{ and }w_6\in \mbox{co}\{1,z_0,w_0\}.
\end{equation}

First, we have $\mbox{Im}w_2=c|a|^4\sin2\eta>0$. On the other hand, by $2|a|\cos\eta=1$, we have $$\mbox{Re}w_2=1-c|a|^4\cos2\eta=1+c|a|^4-\frac{c|a|^2}{2}=c-\frac{c|a|^2}{2}>0.$$ Thus $w_2$ is in the first quadrant.

Secondly, since $\eta_4\in(\pi/4,\pi/3)$ satisfies $8\cos^4\eta_4=1$, by using the known inequality $8\cos^4(2\pi/7)>1$, we get $2\pi/7<\eta_4$, and hence the inequality
\begin{equation}\label{7}
2\pi<7\eta<2\pi+\eta
\end{equation}
holds for the given $\eta$. Moreover, we have
$$\arg z_0=2\pi-\eta<4\pi-7\eta=\arg z_6<2\pi.$$

Finally, we have $\angle z_0z_6w_2\in(0,\pi)$ by
\begin{eqnarray*}
&&\mbox{Im}\frac{w_2-z_6}{z_0-z_6}\asymp\mbox{Im}((\bar{z}_0-\bar{z}_6)(w_2-z_6))=\mbox{Im}((\bar{z}_0-\bar{z}_6)(w_2-z_0))\\
&\asymp&\mbox{Im}((\bar{a}-\bar{a}^7)(1-c|a|^2a^2-ca))=\mbox{Im}(\bar{a}-\bar{a}^7-c|a|^4a+c|a|^6\bar{a}^5+c|a|^2\bar{a}^6)\\
&\asymp& (1+c|a|^4)\sin\eta+c|a|^{10}\sin5\eta+c|a|^7\sin6\eta-|a|^6\sin7\eta\\
&\asymp& c+c|a|^{10}\frac{\sin5\eta}{\sin\eta}+c|a|^7\frac{\sin6\eta}{\sin\eta}-|a|^6\frac{\sin7\eta}{\sin\eta}\\
&\asymp& 1+|a|^6-3|a|^8+|a|^{10}+|a|^2-4|a|^4+3|a|^6-c^{-1}(1-5|a|^2+6|a|^4-|a|^6)\\
&=& 1+|a|^2-4|a|^4+4|a|^6-3|a|^8+|a|^{10}-(1-|a|^4)(1-5|a|^2+6|a|^4-|a|^6)\\
&=& 6|a|^2-9|a|^4+3|a|^8\asymp2-3|a|^2+|a|^6>0,
\end{eqnarray*}
where the positivity is due to that the function $f(x)=2-3x+x^3$ is strictly decreasing on $[0,1]$ and takes $0$ at $x=1$.

The above facts verify the relationship $z_6\in \mbox{co}\{0,w_2,z_0\}$. By applying $f_2$ to it, we get $w_6\in \mbox{co}\{1,z_0,w_0\}$. This proves (\ref{wd}), and thus $z_6,w_6\in\mbox{co}(V_3)$.
\end{proof}

By applying $f_1$ to $z_6\in \mbox{co}\{0,w_2,z_0\}$, we easily get $z_7\in\mbox{co}(V_3)$. We require the following finer result.

\begin{lemma}\label{V33}
If $\eta\in[\eta_4,\pi/3)$ then $z_7\in \mbox{co}\{0,z_0,z_1\}$ and $w_7\in \mbox{co}\{1,w_0,w_1\}$.
\end{lemma}
\begin{proof} Let $\eta\in[\eta_4,\pi/3)$. By (\ref{7}), we have $2\pi+\eta<8\eta<2\pi+2\eta$. Thus
$$\arg z_1<\arg z_7<\arg z_0.$$
On the other hand, we have
\begin{eqnarray*}
\mbox{Im}\frac{z_0-z_7}{z_1-z_7}&\asymp&\mbox{Im}((\bar{z}_1-\bar{z}_7)(z_0-z_7))=\mbox{Im}((\bar{z}_1-\bar{z}_0)(z_0-z_7))\\
&\asymp&\mbox{Im}(z_7-z_0)\asymp\mbox{Im}(a^8-a)\asymp\sin\eta-|a|^7\sin8\eta\\
&\asymp&1-\frac{|a|^7\sin8\eta}{\sin\eta}=1-(1-6|a|^2+10|a|^4-4|a|^6)\\
&\asymp& 3-5|a|^2+2|a|^4=(1-|a|^2)(3-2|a|^2)>0,
\end{eqnarray*}
which implies $\angle z_1z_7z_0\in(0,\pi)$. Thus $z_7\in \mbox{co}\{0,z_0,z_1\}$. By applying $f_2$ to this relationship, we get $w_7\in \mbox{co}\{1,w_0,w_1\}$.
\end{proof}

Let $\eta\in[\eta_4,\pi/3)$. The above lemmas yield $\{z_1,\cdots,z_7, w_1,\cdots w_7\}\subset\mbox{co}(V_3)$. Now, applying $f_1^k$ to $z_7\in \mbox{co}\{0,z_0,z_1\}$ gives $z_{k+7}\in \mbox{co}\{0,z_k,z_{k+1}\}$, and then applying  $f_2$ to $z_{k+7}\in \mbox{co}\{0,z_k,z_{k+1}\}$ gives $w_{k+7}\in \mbox{co}\{1,w_k,w_{k+1}\}$, where $k$ is a positive integer. By using these facts, we can prove (\ref{gjw}) by induction.
It then follows that $\mbox{co}(V)=\mbox{co}(V_3)$.

\section{Disk Properties of Dragon Curves}

Let $\eta\in(0,\pi/3)$. Denote by $D_k$ the closed disk $|z|\leq |z_k|$. Since $|z_k|$ is decreasing, one has $\{z_j: j\geq k\}\subset D_k$ for every integer $k\geq 1$. We have the following result.

\begin{lemma}\label{disc}
Let $\eta\in (0,\eta_4)$. Then, for every integer $k\geq 1$, the broken line $[z_0,z_1,\cdots, z_{k}]$ lies outside the disk $D_{k+1}$.
\end{lemma}
\begin{proof}
Let $\eta\in (0,\eta_4)$ and $k\geq 1$.
To prove Lemma \ref{disc}, it suffices to show
$$|z_{k-1}+t(z_k -z_{k-1})|>|z_{k+1}|$$
for all $t\in[0,1]$, which reduces to proving
$$|1-t\bar{a}|^2>|a|^4$$
for all $t\in[0,1]$. Actually, we have
$$
|1-t\bar{a}|^2-|a|^4=1-t\bar{a}-ta+t^2|a|^2-|a|^4=1-t+t^2|a|^2-|a|^4.
$$
If $2|a|^2\leq1$, we have
$$1-t+t^2|a|^2-|a|^4 \geq 2(1-t)|a|^2+t^2|a|^2-|a|^4 \geq |a|^2-|a|^4>0.$$
If $2|a|^2>1$, we have by (\ref{eta4})
\begin{eqnarray*}
1-t+t^2|a|^2-|a|^4 &=& 1-|a|^4+(t|a|-\frac{1}{2|a|})^2-\frac{1}{4|a|^2}\\
&\geq& 1-|a|^4-\frac{1}{4|a|^2}\asymp 4|a|^2-4|a|^6-1\\&>& 2|a|^2-4|a|^6=2|a|^2(1-2|a|^4)>0.
\end{eqnarray*}
This completes the proof. \end{proof}

\begin{lemma}\label{disc1}
Let $k\geq 4$ be an integer. Then the following propositions hold.

1) If $\eta\in [\eta_{k+1},\pi/k)$ then the segment $[z_0,z_{2k+1}]$ lies outside $D_{2k+2}$.

2) If $\eta\in [\pi/k,\eta_k)$ then the segment $[z_0,z_{2k-1}]$ lies outside $D_{2k}$.
\end{lemma}

\begin{proof}
1) Let $k\geq 4$ be an integer and $\eta\in [\eta_{k+1},\pi/k)$. Then
$$2\pi-\eta <(2k+1)\eta <2\pi+\eta,$$
and hence
\begin{equation}\label{k1}
\cos(2k+1)\eta>\cos\eta.
\end{equation}

To show that $[z_{2k+1},z_0]$ lies outside the disk $D_{2k+2}$, it suffices to show
$$
\min_{t\in[0,1]}|(1-t)z_0+tz_{2k+1}|=|z_{2k+1}|,
$$
which reduces to proving
\begin{equation}\label{eq1}
\min_{t\in[0,1]}|1-t+ta^{2k+1}|=|a|^{2k+1}.
\end{equation}

Let $\varphi(t)=|1-t+ta^{2k+1}|^2$. By using (\ref{k1}) and $2|a|\cos\eta=1$, we have
\begin{eqnarray*}
\varphi(t)&=&(1-t)^2+2t(1-t)\mbox{Re}a^{2k+1}+t^2|a|^{4k+2}\\
&=& (1-t)^2+2t(1-t)|a|^{2k+1}\cos(2k+1)\eta+t^2|a|^{4k+2}\\
&\geq& (1-t)^2+2t(1-t)|a|^{2k+1}\cos\eta+t^2|a|^{4k+2}\\
&=& (1-t)^2+t(1-t)|a|^{2k}+t^2|a|^{4k+2}.
\end{eqnarray*}
For $t\in(0,1)$ we have by (\ref{eta4})
\begin{eqnarray*}
\varphi'(t) &=& -2(1-t)+(1-2t)|a|^{2k}+2t|a|^{4k+2} \\
&=& -2+|a|^{2k}+2t(1-|a|^{2k}+|a|^{4k+2})\\
&<& -|a|^{2k}+2|a|^{4k+2}=-|a|^{2k}(1-2|a|^{2k+2})<0,
\end{eqnarray*}
which implies that the function $\varphi(t)$ is decreasing on $[0,1]$, and hence
$$\min_{t\in[0,1]}\varphi(t)=\varphi(1)=|a|^{4k+2}.$$
This proves (\ref{eq1}).

\medskip

2) Let $k\geq 4$ be an integer and $\eta\in [\pi/k,\eta_k)$. We have
$$2\pi-\eta\leq(2k-1)\eta <2\pi+\eta,$$
so
$$
\cos(2k-1)\eta\geq\cos\eta.
$$

To show that $[z_{2k-1},z_0]$ lies outside $D_{2k}$, it suffices to show
$$
\min_{t\in[0,1]}|(1-t)z_0+tz_{2k-1}|=|z_{2k-1}|,
$$
which reduces to proving
$$
\min_{t\in[0,1]}|1-t+ta^{2k-1}|=|a|^{2k-1}.
$$
This proof is analogous to that of (\ref{eq1}) and is left to the readers.
\end{proof}

\begin{lemma}\label{64}
Let $\eta\in(0,\eta_4)$. The following propositions hold.

1) $|z_j|<\mbox{Re} z_0<1$ for all integers $j\geq 3$.

2) $|z_j|<\min_{t\in[0,1]}|(1-t)z_0+tz_1|$ for all integers $j\geq 2$.
\end{lemma}
\begin{proof}
1) Let $\eta\in(0,\eta_4)$ and $j\geq 3$. Arguing by using (\ref{eta4}), we get $$c<2\,\mbox{ and }\, |z_j|=c|a|^{j+1}<{c}/{2}=c|a|\cos\eta=\mbox{Re} z_0<1.$$

2) Let $\eta\in(0,\eta_4)$ and $j\geq 2$. To prove the desired inequality, it suffices to show
$$|a|^{2j}<\min_{t\in[0,1]}\varphi(t),$$
where $\varphi(t)=|1-t+ta|^2$.
In fact, we have
\begin{eqnarray*}
\varphi(t) &=& (1-t)^2+2t(1-t)\mbox{Re} a+t^2|a|^2 \\
&=& (1-t)^2+t(1-t)+t^2|a|^2\\
&=&1-t+t^2|a|^2
\end{eqnarray*}
and $$\varphi'(t)=-1+2t|a|^2.$$

If $2|a|^2<1$ then $\varphi(t)$ is decreasing on $[0,1]$, so $\varphi(t)\geq \varphi(1)=|a|^2>|a|^{2j}$. In the other case where $2|a|^2\geq1$, we get by using  (\ref{eta4})
$$\varphi(t)=1-t+t^2|a|^2\geq 1-\frac{1}{4|a|^2}\geq\frac{1}{2}>|a|^{2j}.$$
This completes the proof.
\end{proof}

\section{The Proof of Theorem \ref{tm2}}

The proof of Theorem \ref{tm1} establishes $\mbox{co}(K_\eta)=\mbox{co}(V)$ for every $\eta\in(0,\pi/3)$. For every integer $k\geq 3$ and every $\eta\in[\eta_{k+1},\eta_k)$, Lemma \ref{q3} asserts that $\mbox{co}(V_k)$ is the polygon of vertices $b_1,z_0,z_1,\cdots, z_k,w_1,w_2,\cdots, w_k$ in clockwise order. To complete the proof of Theorem \ref{tm2}, the remaining work is to verify
\begin{equation}\label{jdl}
\mbox{co}(V)=\mbox{co}(V_k)
\end{equation}
for the above given $k$ and $\eta$. This work has been done for $k=3$ and $\eta\in[\eta_4,\pi/3)$ in Section 5. In this section, we prove (\ref{jdl})
for $k\geq 4$ and $\eta\in[\eta_{k+1},\eta_k)$.

Let $k\geq 4$ be an integer and $\eta\in[\eta_{k+1},\eta_k)$. To prove (\ref{jdl}), it suffices to show
\begin{equation}\label{reduc}
\{z_j: j>k\}\cup\{w_j: j>k\}\subset\mbox{co}(V_k).
\end{equation}
In the case $\eta\in[\eta_{k+1},\pi/k)$, by using Lemmas \ref{disc} and \ref{disc1}, we can prove
$$\{z_j: j\geq 2k+2\}\subset D_{2k+2}\subset\mbox{co}\{z_0,z_1,\cdots, z_{2k+1}\}.$$
Then applying $f_2$ to this relationship yields
$$\{w_j: j\geq2k+2\}\subset \mbox{co}\{w_0,w_1,\cdots, w_{2k+1}\}.$$
In the case $\eta\in[\pi/k,\eta_k)$, by using Lemmas \ref{disc} and \ref{disc1}, we can prove
$$\{z_j: j\geq2k\}\subset D_{2k}\subset\mbox{co}\{z_0,z_1,\cdots, z_{2k-1}\}.$$
Then applying $f_2$ to this relationship yields
$$\{w_j: j\geq2k\}\subset \mbox{co}\{w_0,w_1,\cdots, w_{2k-1}\}.$$
Therefore, to prove (\ref{reduc}), it suffices to prove the following lemma.

\begin{lemma}\label{p21} Let $k\geq 4$. The following propositions hold.

1) If $\eta\in[\eta_{k+1},{\pi}/k)$ then
$$
z_{k+1},z_{k+2},\cdots, z_{2k+1}, w_{k+1},w_{k+2},\cdots, w_{2k+1}\in \mbox{co}(V_k).
$$

2) If $\eta\in[{\pi}/k,\eta_k)$ then
$$
z_{k+1},z_{k+2},\cdots, z_{2k-1}, w_{k+1},w_{k+2},\cdots, w_{2k-1}\in \mbox{co}(V_k).
$$
\end{lemma}

The proof of Lemma \ref{p21} is based on the following lemmas.

\begin{lemma}\label{theta}
If $k\geq 4$ is an integer and $\eta\in[\eta_{k+1},\eta_k)$ then $\angle1z_jw_1\in(0,\pi)$ for every integer $j\geq k$.
\end{lemma}
\begin{proof} Let $k\geq 4$, $\eta\in[\eta_{k+1},\eta_k)$, and $j\geq k$ be given. We have
\begin{eqnarray*}
\mbox{Im}\frac{w_1-z_j}{1-z_j}&\asymp&\mbox{Im}((1-\bar{z}_j)(w_1-z_j))=\mbox{Im}((1-\bar{z}_j)(w_1-1))\\
&\asymp & \mbox{Im}((c\bar{a}^{j+1}-1)a)\asymp \sin\eta+c|a|^{j+1}\sin j\eta.
\end{eqnarray*}

Let $\Theta_j(\eta)=\sin\eta+c|a|^{j+1}\sin j\eta$. We are going to show  $\Theta_j(\eta)>0$.

If $k=4$, $\eta\in[\eta_5,\eta_4)$, and $j=4$, one has  by using (\ref{ct}) and (\ref{eta4})
\begin{eqnarray*}
\Theta_4(\eta) &\asymp&1+4c|a|^5\cos\eta\cos2\eta=1+2c|a|^4\cos2\eta\\
&\geq&c|a|^4+2c|a|^4\cos2\eta\asymp 1+2\cos2\eta\\ &=& 4\cos^2\eta-1\asymp1-|a|^2>0.
\end{eqnarray*}

If $k=4$, $\eta\in [\eta_5,\pi/4)$, and $j\geq 5$,  one has $2|a|^2<1$, by which we get
\begin{eqnarray*}
\Theta_j(\eta) &\asymp& (1-|a|^4)\sin\eta+|a|^{j+1}\sin j\eta \geq(1-|a|^4)\sin\frac{\pi}{5}-|a|^6 \\
&\geq& (1-|a|^4)\frac{2}{5}-|a|^6\asymp2-2|a|^4-5|a|^6>2-\frac{1}{2}-\frac{5}{8}>0.
\end{eqnarray*}

If $k=4$, $\eta\in [\pi/4, \eta_4)$, and $j\geq 5$, one has  by using (\ref{eta4})
$$
\Theta_j(\eta) \geq\sin\eta-c|a|^6\geq\sin\frac{\pi}{4}-|a|^2\asymp1-\sqrt{2}|a|^2>0.
$$

If $k\geq 5$, $\eta\in[\eta_{k+1},\eta_k)$, and $j\geq k$, one has  $2|a|^2<1$, by which we get
\begin{eqnarray*}
\Theta_j(\eta) &\asymp& (1-|a|^4)\sin\eta+|a|^{j+1}\sin j\eta \\
&>& (1-|a|^4)\frac{2\eta}{\pi}-|a|^{k+1} \\
&>& \frac{3}{4}\frac{2}{k+1}-(\frac{1}{\sqrt{2}})^{k+1}>0.
\end{eqnarray*}

This proves $\Theta_j(\eta)>0$, so $\angle1z_jw_1\in(0,\pi)$ for the given $k$, $\eta$, and $j$.
\end{proof}

\begin{lemma}\label{jiao}
Let $k\geq 4$. The following propositions hold.

1) If $\eta\in[\eta_{k+1},{\pi}/k)$ then
$$\angle z_{j-1}z_{j}w_1\in(\pi,2\pi)\, \mbox{ for }\, k< j< 2k.$$

2) If $\eta\in[{\pi}/k,\eta_k)$ then $$\angle z_{j-1}z_{j}w_1\in(\pi,2\pi)\, \mbox{ for }\,k<j<2k-2.$$
\end{lemma}
\begin{proof}
By Lemma \ref{Im},
$$\mbox{Im}\frac{w_1-z_j}{z_{j-1}-z_j}\asymp\Phi_j(\eta).$$

1) Let $k\geq 4$ be an integer and $\eta\in[\eta_{k+1},{\pi}/k)$. Then $\Phi_{k+1}(\eta)<0$ by the proof of Lemma \ref{L1.2}. On the other hand, one has $1-2|a|^2>0$ and
\begin{equation}\label{ty}
\pi<(k+1)\eta<\pi+\eta<(k+2)\eta<\cdots<(2k-1)\eta<2\pi-\eta.
\end{equation}
Thus, for $j\in\{k+2,k+3,\cdots,2k-1\}$ one has
$$
\sin (j-1)\eta\leq 0\, \mbox{ and } \sin j\eta<-\sin\eta,
$$
which gives
\begin{eqnarray*}
\Phi_j(\eta)&=& (1-|a|^2-|a|^4)\sin (j-1)\eta+|a|^3\sin j\eta+|a|^j\sin\eta\\
&<& -|a|^3\sin\eta+|a|^j\sin\eta<0.
\end{eqnarray*}
This proves $\Phi_j(\eta)<0$ for $k<j<2k$, so $\angle z_{j-1}z_{j}w_1\in(\pi,2\pi)$ for such $j$.

2) Let $k\geq 4$ be an integer and $\eta\in[{\pi}/k, \eta_k)$. Then
\begin{equation}\label{dh}
\pi\leq k\eta<\pi+\eta\leq(k+1)\eta<(k+2)\eta<\cdots<(2k-3)\eta<2\pi-\eta.
\end{equation}
Thus, for $j\in\{k+1,k+2,\cdots,2k-3\}$, we have
\begin{equation}\label{xy1}
\sin (j-1)\eta\leq 0\, \mbox{ and }\, \sin j\eta<-\sin\eta.
\end{equation}
In the case $k=4$, the given $j$ is equal to $5$. By using (\ref{eta4}) and (\ref{xy1}), we get
\begin{eqnarray*}
\Phi_5(\eta)&=& (1-|a|^2-|a|^4)\sin 4\eta+|a|^3\sin 5\eta+|a|^5\sin\eta\\
&<&(|a|^4 -|a|^2)\sin 4\eta -|a|^3\sin\eta+|a|^5\sin\eta\\
&\asymp&-\sin 4\eta-|a|\sin\eta\asymp-4\cos\eta\cos2\eta-|a|\\
&\asymp& -2\cos 2\eta-|a|^2=2-\frac{1}{|a|^2}-|a|^2<0.
\end{eqnarray*}
In the case $k\geq 5$, one has $1-2|a|^2>0$, which, together with (\ref{xy1}), implies
$$
\Phi_j(\eta)<-|a|^3\sin\eta+|a|^j\sin\eta<0.
$$
This proves $\Phi_j(\eta)<0$ for $k<j<2k-2$, and hence $\angle z_{j-1}z_{j}w_1\in(\pi,2\pi)$. \end{proof}

\begin{lemma}\label{posi00} Let $k\geq 4$. The following propositions hold.

1) If $\eta\in[\eta_{k+1},{\pi}/k)$ then
$z_j\in \mbox{co}\{0, 1, w_1, z_k\}$ for $k<j<2k$.

2) If $\eta\in[{\pi}/k,\eta_k)$ then
$z_j\in \mbox{co}\{0, 1, w_1, z_k\}$ for $k<j<2k-2$.
\end{lemma}
\begin{proof}
1) Let $k\geq 4$ be an integer and $\eta\in[\eta_{k+1},{\pi}/k)$. By using (\ref{ty}), we get
$$\pi>\arg z_k>\arg z_{k+1}>\cdots>\arg z_{2k-1}>0.$$
On the other hand, by Lemmas \ref{theta} and \ref{jiao}, we have $\angle 1z_jw_1\in(0,\pi)$ and $\angle z_{j-1}z_{j}w_1\in(\pi,2\pi)$.
These facts imply $z_j\in \mbox{co}\{0, 1, w_1, z_{j-1}\}$ for $k<j<2k$, which allows us to get $z_j\in \mbox{co}\{0, 1, w_1, z_k\}$ by induction.

2) Let $k\geq 4$ be an integer and $\eta\in[{\pi}/k, \eta_k)$. By using (\ref{dh}), we get
$$\pi>\arg z_k>\arg z_{k+1}>\cdots>\arg z_{2k-3}>0.$$
On the other hand, by Lemmas \ref{theta} and \ref{jiao}, we have $\angle 1z_jw_1\in(0,\pi)$ and $\angle z_{j-1}z_{j}w_1\in(\pi,2\pi)$ for $k<j<2k-2$.  These facts imply $z_j\in \mbox{co}\{0, 1, w_1, z_{j-1}\}$, which allows us to get $z_j\in \mbox{co}\{0, 1, w_1, z_k\}$ for $k<j<2k-2$ by induction.
\end{proof}

\begin{lemma}\label{54}
Let $k\geq 4$. The following propositions hold.

1) If $\eta\in[\eta_{k+1},\pi/k)$ then $z_{2k}, z_{2k+1}\in \mbox{co}\{0, 1,z_0, z_1, z_{2k-1}\}$.

2) If $\eta\in[\pi/k,\eta_k)$ then $z_{2k-2}, z_{2k-1}\in \mbox{co}\{0, 1,z_0, z_1, z_{2k-3}\}$.
\end{lemma}
\begin{proof}
1) Let $k\geq 4$ be an integer and $\eta\in[\eta_{k+1},\pi/k)$. Then we have
\begin{equation}\label{lh}
2k\eta\in(2\pi-2\eta,2\pi)\subset (\pi+2\eta,2\pi)\,\mbox{ and }\, \arg z_{2k-1}=2\pi-2k\eta\in(0,2\eta).
\end{equation}

Case 1. $\arg z_{2k-1}\in(0,\eta]$.

\medskip

In this case, it holds either $\arg z_0<\arg z_{2k}<2\pi$ or $\arg z_{2k}=0$. On the other hand, we have
$|z_{2k}|<\mbox{Re} z_0<1$ by Lemma \ref{64}. Thus, $z_{2k}\in \mbox{co}\{0, z_0,1\}$.

As for $z_{2k+1}$, one has $\arg z_1<\arg z_{2k+1}\leq\arg z_0$. In addition, one has by Lemma \ref{64} $$|z_{2k+1}|<\min_{t\in[0,1]}|(1-t)z_0+tz_1|.$$ Thus $z_{2k+1}\in \mbox{co}\{0, z_1, z_0\}$.

\medskip

Case 2. $\arg z_{2k-1}\in(\eta,2\eta)$.

\medskip

In this case, one has $0<\arg z_{2k}<\arg z_{2k-1}$.
We claim $z_{2k}\in \mbox{co}\{0, 1, z_{2k-1}\}$. To prove this claim, it suffices to show that
$\angle 1z_{2k}z_{2k-1}\in(0,\pi)$, which is reduced to showing
\begin{equation}\label{2t+0}
\mbox{Im}((1-\bar{z}_{2k})(z_{2k-1}-z_{2k}))>0.
\end{equation}

By the first relationship of (\ref{lh}), one has $$(2k-1)\eta\in(\pi+\eta,2\pi-\eta),$$
and hence $\sin(2k-1)\eta<-\sin\eta$. It then follows by (\ref{eta4}) that
\begin{eqnarray*}
&&\mbox{Im}((1-\bar{z}_{2k})(z_{2k-1}-z_{2k}))
\asymp\mbox{Im}(1-c\bar{a}^{2k+1})a^{2k-1}\\ &=&\mbox{Im}(a^{2k-1}-c|a|^{4k-2}\bar{a}^2)\asymp-\sin(2k-1)\eta-c|a|^{2k+1}\sin2\eta\\
&=&-\sin(2k-1)\eta-c|a|^{2k}\sin\eta>(1-c|a|^{2k})\sin\eta>0.
\end{eqnarray*}
This proves the inequality (\ref{2t+0}), so we have $z_{2k}\in \mbox{co}\{0, 1, z_{2k-1}\}$.

As for $z_{2k+1}$ in Case 2, one has $\arg z_0<\arg z_{2k+1}<2\pi$. In addition, one has
$|z_{2k+1}|<\mbox{Re} z_0<1$ by Lemma \ref{64}. Thus $z_{2k+1}\in \mbox{co}\{0, z_0,1\}$.

\medskip

2) Let $k\geq 4$ be an integer and $\eta\in[\pi/k,\eta_k)$. Then we have
$$(2k-2)\eta\in[2\pi-2\eta, 2\pi)\,\mbox{ and }\,\arg z_{2k-3}=2\pi-(2k-2)\eta\in(0,2\eta].$$

Case 3. $\arg z_{2k-3}\in(0,\eta]$.

\medskip

In this case, it holds either $\arg z_0<\arg z_{2k-2}<2\pi$ or $\arg z_{2k}=0$. In addition, we have
$|z_{2k-2}|<\mbox{Re} z_0<1$ by Lemma \ref{64}. Thus, $z_{2k-2}\in \mbox{co}\{0, z_0,1\}$.

As for the point $z_{2k-1}$, one has $\arg z_1<\arg z_{2k-1}\leq\arg z_0$. In addition, one has by Lemma \ref{64} $$|z_{2k-1}|<\min_{t\in[0,1]}|(1-t)z_0+tz_1|.$$ Thus $z_{2k-1}\in \mbox{co}\{0, z_1, z_0\}$.

\medskip

Case 4. $\arg z_{2k-3}\in(\eta,2\eta]$.

\medskip

In this case, $0<\arg z_{2k-2}<\arg z_{2k-3}$.
We claim $z_{2k-2}\in \mbox{co}\{0, 1, z_{2k-3}\}$. To prove this claim, it suffices to show that
$\angle 1z_{2k-2}z_{2k-3}\in(0,\pi)$, which is reduced to showing
\begin{equation}\label{2t+}
\mbox{Im}((1-\bar{z}_{2k-2})(z_{2k-3}-z_{2k-2}))>0.
\end{equation}

If $k>4$, one has $4\eta<(k-1)\eta<\pi$ by the assumption of 2), and hence
$$(2k-2)\eta\in[2\pi-2\eta, 2\pi)\subset(\pi+2\eta, 2\pi),$$
which implies
$\sin(2k-3)\eta<-\sin\eta$. Thus
\begin{eqnarray*}
\mbox{Im}((1-\bar{z}_{2k-2})(z_{2k-3}-z_{2k-2}))
&\asymp& -\sin(2k-3)\eta-c|a|^{2k-2}\sin\eta\\&>&(1-c|a|^{2k-2})\sin\eta>0.
\end{eqnarray*}

If $k=4$, the assumption of 2) and the inequality (\ref{eta4}) yield $$2|a|^2\geq1> 2|a|^4.$$ Therefore
\begin{eqnarray*}
&&\mbox{Im}((1-\bar{z}_{6})(z_{5}-z_{6}))\asymp -\sin5\eta-c|a|^6\sin\eta\asymp-\frac{\sin5\eta}{\sin\eta}-c|a|^6\\
&\asymp&-\frac{1-3|a|^2+|a|^4}{|a|^4}-c|a|^6\asymp(1-|a|^4)(3|a|^2-1-|a|^4)-|a|^{10}\\
&\asymp&3|a|^2-1-3|a|^6+|a|^8-|a|^{10}>0,
\end{eqnarray*}
where the positivity follows by the convexity of the function
$$f(x)=3x-1-3x^3+x^4-x^5$$
on the interval $[{1}/{2},{1}/{\sqrt{2}})$. Indeed, $f''(x)=-18x+12x^2-20x^3$. By a simple computation, one has  $f''(x)<0$ on $[{1}/{2},{1}/{\sqrt{2}})$, and hence $f$ is convex satisfying $$f(x)\geq\min\{f({1}/{2}),f({1}/{\sqrt{2}})\}>0$$
on $[{1}/{2},{1}/{\sqrt{2}})$. This proves (\ref{2t+}), so we have $z_{2k-2}\in \mbox{co}\{0, 1, z_{2k-3}\}$.

As for the point $z_{2k-1}$ in Case 4, it holds either $\arg z_0<\arg z_{2k-1}<2\pi$ or $\arg z_{2k-1}=0$.
In addition, $|z_{2k-1}|<\mbox{Re} z_0<1$ by Lemma \ref{64}. These two facts imply $z_{2k-1}\in \mbox{co}\{0, z_0,1\}$.
This completes the proof of the lemma.
\end{proof}

{\bf The Proof of Lemma \ref{p21}.} Let $k\geq 4$ be an integer and  $\eta\in[\eta_{k+1},\pi/k)$. By Lemma \ref{posi00} we have
$$z_{k+1},z_{k+2},\cdots, z_{2k-1}\in \mbox{co}\{0, 1, w_1, z_k\}.$$
By applying $f_2$ to this relationship, we get
$$w_{k+1},w_{k+2},\cdots, w_{2k-1}\in \mbox{co}\{1, a, b_1, w_k\}.$$
Thus $$z_{k+1},z_{k+2},\cdots, z_{2k-1}, w_{k+1},w_{k+2},\cdots, w_{2k-1}\in \mbox{co}(V_k).$$
Furthermore, by Lemma \ref{54} we have
$$z_{2k}, z_{2k+1}\in \mbox{co}\{0, 1,z_0, z_1, z_{2k-1}\}.$$
Applying $f_2$ to this relationship yields
$$w_{2k}, w_{2k+1}\in \mbox{co}\{1, a, w_0, w_1, w_{2k-1}\}.$$
It then follows that $$z_{2k}, z_{2k+1},w_{2k}, w_{2k+1}\in \mbox{co}(V_k).$$
%Since $\{a,w_0,0,1\}\subset \mbox{co}(V_k)$ by Lemmas \ref{q2} and \ref{1L2.1}, we then get
To sum up, we have
$$z_{k+1},z_{k+2},\cdots, z_{2k+1}, w_{k+1},w_{k+2},\cdots, w_{2k+1}\in \mbox{co}(V_k).$$

Let $k\geq 4$ be an integer and $\eta\in[\pi/k, \eta_{k})$. Arguing by using Lemmas \ref{posi00} and \ref{54} as we just did, we get
$$z_{k+1},z_{k+2},\cdots, z_{2k-1}, w_{k+1},w_{k+2},\cdots, w_{2k-1}\in \mbox{co}(V_k).$$
This proves Lemma \ref{p21}, and thus completes the proof of Theorem \ref{tm2}.

\end{document}